\numberwithin{equation}{subsection}
\theoremstyle{plain}
\newtheorem{theo}{Theorem}
\newtheorem*{theo*}{Theorem}
\theoremstyle{definition}
\newtheorem{lem}{Lemma}[section]
\newtheorem{prob}[lem]{Problem}
\newtheorem{prop}[lem]{Proposition}
\theoremstyle{remark}
\newtheorem*{ack}{Acknowledgements}
\newtheorem*{rem*}{Remark}
\newtheorem{rem}[lem]{Reamrk}
\newtheorem{cor}[lem]{Corollary}
\begin{document}

\title{Viscosity solutions, ends  and ideal boundaries}
\author{Xiaojun Cui }

\address{Xiaojun Cui\endgraf
Department of Mathematics, Nanjing University,
Nanjing 210093, Jiangsu Province,  People's
Republic of China.}
\email{xcui@nju.edu.cn}


\keywords{Busemann function; Horo-function;  Viscosity solution; $dl$-function; End; Ideal boundary}
\subjclass[2010]{49L25, 53C22, 53D25, 70H20}
\thanks{Supported by the National Natural Science Foundation of China
 (Grant 11271181), the Project Funded by the Priority Academic Program Development of Jiangsu Higher Education Institutions  (PAPD) and the Fundamental Research Funds for the Central Universities.}

 \abstract
On a smooth, non-compact, complete, boundaryless, connected Riemannian manifold $(M,g)$, there are three kinds of objects that have been studied extensively:

$\bullet$ Viscosity solutions to the Hamilton-Jacobi equation determined by the Riemannian metric;

$\bullet$ Ends introduced by Freudenthal and more general other remainders from compactification theory;

$\bullet$ Various kinds of ideal boundaries introduced by Gromov.

In this paper, we will present  some initial relationship among these three kinds of objects and some related topics are also considered.

\endabstract
\maketitle

\section*{Background, preliminaries and results}

This paper is a continuance to the papers \cite{CC}, \cite{C}.  Our aim is to understand the dynamics of minimal geodesics on a non-compact Riemannian manifold from the viewpoint of Aubry-Mather theory.

Let $M$ be a smooth, non-compact, complete, boundaryless (in the usual sense of point set topology), connected  Riemannian manifold with Riemanian metric $g$. Let $d$ be the distance on $M$ and $| \cdot|_g$ the norm on the tangent bundle $TM$ and/or the contangent bundle $T^*M$  induced by the Riemannian metric $g$. Let $\nabla$ be the gradient determined by the Riemannian metric $g$. Throughout this paper, all geodesic segments are always parameterized to be unit-speed.   By a ray \cite{Bu}, we mean a geodesic segment $\gamma:[0, +\infty) \rightarrow M$ such that $d(\gamma(t_1),\gamma(t_2))=|t_2-t_1|$ for any $t_1,t_2 \geq 0$. Throughout this paper, $| \cdot |$ means Euclidean norm. By definition, the Busemann function associated to a ray $\gamma$, is defined as
$$b_{\gamma}(x):=\lim_{t \rightarrow +\infty}[d(x, \gamma(t))-t].$$
Clearly, $b_{\gamma}$ is a Lipschitz function with Lipschitz constant 1, i.e.
$$|b_{\gamma}(x)-b_{\gamma}(y)| \leq d(x,y).$$
Moreover, in \cite{CC}, the authors showed that Busemann functions  are in fact locally semi-concave with linear modulus (for the definition of local semi-concavity of linear modulus, we refer to \cite{CS}, or \cite{CC}).

There are also other two kinds of functions, both introduced by Gromov, may be regarded as the generalizations of Busemann functions. They are defined as follows. Let ${x_n}$ be a sequence of points in $M$ such that $d(y,x_n) \rightarrow \infty$ for some fixed point $y$ (hence for any other fixed point in $M$) and  $$h(x):= \lim[d(x,x_n)-d(y,x_n)]$$ exists in the compact-open topology. Such a limit function will be called horo-function. More generally, let ${K_n}$ be a sequence of closed subsets in $M$ such that $d(y,K_n) \rightarrow \infty$ for some fixed point $y$ (hence for any other fixed point in $M$) and  $$h(x):= \lim[d(x,K_n)-d(y,K_n)]$$ exists in the compact-open topology. Such a limit function will be called $dl$ (distance-like)-function (here, the definition of $dl$-function is slightly different from the original definition of Gromov [\cite{Gr2}, Page 202], but it will not cause any confusion. See Remark 2.2 for further details).

As it is  explained explicitly in \cite{Mar} (see also \cite{Ba}), we could define at least three kinds of ideal boundaries \cite{Gr1}, \cite{Gr2} (all of them are equipped with the quotient compact-open topologies):

 $$M(\infty):= \{\text{Busemann functions}\}/\{\text{constant functions}\};$$

 $$M(\partial):= \{\text{horo-functions}\}/\{\text{constant functions}\};$$

$$M(\natural):= \{dl\text{-functions}\}/\{\text{constant functions}\}.$$

\begin{rem}
In the terminology of topology, here ideal boundary (not the usual one in the sense of point set topology and for short we just refer it as boundary in the rest of this paper) should be understood in this way: We could compactify $M$ (here $M$ is regarded as a non-compact, but locally compact, hemi-compact, Hausdorff, completely regular, $\cdots$, \footnote{In this paper, the non-compact complete Riemannian manifold $M$, regarded as a topological space, is at least Hausdoff, separable,  locally compact, hemi-compact (thus $\sigma$-compact), non-pseudo-compact, completely regular, Lindel\"{o}f (thus  realcompact),  perfectly normal.  For much more topological properties of non-compact complete Riemannian manifolds, we refer to [\cite{Ga2}, Theorem 2], [\cite{Ga3}, Theorem 1.1]. }  topological space) in various  ways.  A  compactification of $M$ means that there exist a compact Hausdorff \footnote{In this paper, we are only interested in the Hausdorff compactifications. Since $M$, as a complete Riemannian manifold,  is completely regular, Hausdorhoff compactifications do always exist [e.g. \cite{Ch}].} topological space, say $\bar M$, and a topological embedding $i$ of $M$ into $\bar M$ such that $i(M)$ is a dense subset of $\bar M$. Then we call $\bar M \setminus i(M)$  to be a boundary ($\bar M \setminus i(M)$ is also called to be a remainder by topologists working in the field of compactification theory) of $M$. If the  embedding $i$ we considered here  is defined in the usual way  by $i(x)=\frac{d(x, \cdot)}{\{\text{ constant functions}\}}$, then only $M(\partial)$ deserves the terminology  ``boundary"  and $M(\infty)$ is only a part of the boundary $M(\partial)$.  By abuse of notations, here we insist on  calling  $M(\infty)$ and $M(\natural)$ to be  boundaries. We will see that calling  $M(\natural)$ a boundary is reasonable in the light of Theorem 2 once the embedding is chosen suitably.
\end{rem}


By definitions, we have $M(\infty) \subseteq M(\partial) \subseteq M(\natural)$.  It has been realized \cite{AGW}, \cite{Co}, \cite{FM}, \cite{IM} that the set $M(\infty)$ is a good analogue of the set of  static classes of Aubry sets  in Aubry-Mather theory for positively definite Lagrangian systems (for details for Aubry-Mather theory, we refer to \cite{Mat1}, \cite{Mat2}, \cite{Fa}).  In \cite{CC} the authors  began to study the geometric property of the Riemannian metric from this viewpoint.  More precisely  [\cite{CC}, Theorem 1, Corollary 7.2] showed that all  $dl$-functions (including horo-functions and Busemann functuions) are viscosity solutions with respect to the Hamilton-Jacobi equation
$$|\nabla u|_g=1. \,\,\,\,\, \,\,\,\,\, \hfill{(*)}$$
A natural inverse problem is

\begin{prob}
Whether any viscosity solution to the Hamilton-Jacobi equation $(*)$
 must be a $dl$-function?
\end{prob}

In this paper, we will show that the answer to this problem is (almost) yes! Precisely, we have the following result.

\begin{theo}
Up to a constant, a function $f$ is a viscosity solution with respect to the Hamilton-Jacobi equation
$$|\nabla u|_g=1$$
if and only $f$ is a $dl$-function. Thus,
$$M(\natural)=\{\text{viscosity solutions}\}/\{\text{constant functions}\}.$$
\end{theo}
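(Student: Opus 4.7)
The ``if'' direction is \cite[Theorem 1, Corollary 7.2]{CC} cited above. For the ``only if'' direction, fix a basepoint $y \in M$ and a viscosity solution $u$ of $(*)$; the plan is to realize $u - u(y)$ as a $dl$-function via the sublevel sets of $u$ itself. Set
\begin{equation*}
K_n := u^{-1}\big((-\infty,\, u(y) - n]\big), \qquad n \geq 1.
\end{equation*}
Since any viscosity subsolution of $(*)$ is automatically $1$-Lipschitz, the lower bound $d(x, K_n) \geq u(x) - u(y) + n$ holds whenever $K_n \neq \emptyset$, and in particular $d(y, K_n) \geq n$.

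The matching upper bound reduces the proof to the following principal technical input: \emph{for each $x_0 \in M$ there exists a unit-speed minimizing geodesic ray $\gamma_{x_0} \colon [0, +\infty) \to M$ starting at $x_0$ along which $u$ decreases at unit rate, $u(\gamma_{x_0}(t)) = u(x_0) - t$.} Granting this, for each $x$ and each integer $n > u(y) - u(x)$ the point $\gamma_x(u(x) - u(y) + n)$ lies in $K_n$, giving $d(x, K_n) \leq u(x) - u(y) + n$ and hence equality with the Lipschitz lower bound. Taking $x = y$ yields $d(y, K_n) = n \to \infty$, and on any compact $C \subset M$, once $n > u(y) - \inf_C u$,
\begin{equation*}
d(x, K_n) - d(y, K_n) = u(x) - u(y) \qquad \text{for all } x \in C.
\end{equation*}
This exhibits $u - u(y)$ as a $dl$-function (its defining sequence is eventually constant on every compact set), and combining both directions gives $M(\natural) = \{\text{viscosity solutions}\}/\{\text{constant functions}\}$.

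The main obstacle is verifying the italicized claim. The plan is a two-stage construction. First, for each finite $T > 0$, build a calibrated segment $\gamma^T \colon [0, T] \to M$ from $x_0$ with $u(\gamma^T(T)) = u(x_0) - T$ by iterating a local descent lemma: the viscosity supersolution inequality $|\nabla u|_g \geq 1$ rules out interior local minima of $u$ and, quantitatively, guarantees in every sufficiently small ball a point at which $u$ drops by essentially the distance travelled. Concatenating such descents and using the geodesic completeness of $(M,g)$ produces $\gamma^T$, which is then a minimizing unit-speed geodesic segment by the calibration together with the $1$-Lipschitz bound. Second, extract a subsequential limit $\gamma_{x_0}$ on each compact subinterval via Arzela--Ascoli and a diagonal argument (the restriction of $\gamma^T$ to any $[0,T_0]$ lives in the compact ball $\overline{B_{T_0}(x_0)}$ by Hopf--Rinow), with the calibration passing to the limit by continuity of $u$. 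The delicate points are that the local descents never stall (ruled out by the absence of interior minima) and that calibration and minimality propagate to the limit (which follows from completeness together with uniform convergence on compact time intervals).
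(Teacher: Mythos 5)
Your proposal is correct and follows essentially the same route as the paper: the paper also takes $K_n$ to be level sets of the solution (namely $f^{-1}(-n)$, which give the same distances as your sublevel sets), derives the lower bound on $d(x,K_n)$ from the $1$-Lipschitz property, and derives the matching upper bound from a unit-speed minimizing ray along which the solution decreases at unit rate, concluding by eventual constancy on compacta. The only divergence is in how that calibrated ray is obtained --- the paper invokes reachable gradients and the cited equivalence of viscosity and minmax solutions for locally uniformly convex Hamiltonians, while you sketch a direct descent-and-limit construction; both are standard.
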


By equipping compatible (with respect to the manifold topology) proximities or  totally bounded uniforms, one could obtain various kinds  compactifications and thus get various kinds of  boundaries \cite{Wi}, \cite{Jo} from the viewpoint of general topology. Among them, the following three are particularly important: Alexandroff compactification (i.e. one-point compcatification), Freudenthal end-compactification and  Stone-\v {C}ech compactification.  On the set of compactifications, a very natural partial order $``\leq "$ is  well defined. Among the set of  compactifications, the Alexandroff compactification is the smallest  compactification and the Stone-\v{C}ech compactificaton is the  largest one with respect to the partial order $``\leq "$. We say that a compactification $C_1$ \footnote{Here, and somewhere else, we do not specify the embedding $i$ and identify $i(M)$ with $M$ whenever we believe that no confusion would be caused.} is smaller than a compactification $C_2$ if $C_1 \leq C_2$ and that $C_1$ is strictly smaller than $C_2$ if $C_1 \leq C_2$ but $C_2 \nleq C_1$. We say that $C_1$ is equivalent to $C_2$ ( $C_1 \simeq C_2 $ ) if $C_1 \leq C_2$ and $C_2 \leq C_1$.  That $C_1$ is equivalent to $C_2$ would imply that $C_1$ is homemorphic  to $C_2$  and the converse is not true in general. The set of equivalence classes of compactifications is a complete lattice in our case, since $M$ is locally compact. Also note that since $M$ is completely regular, the set of equivalence classes of compactifications corresponds to the set of closed separating \footnote{We say a subset $\mathcal{S}$ of $C(M, \mathbb{R})$ is separating if for any closed subset $K$ of $M$, and any point $x \notin K$, there exists $f \in \mathcal{S}$ such that $f(x) \notin \overline{f(K)}$, the closure of $f(K)$.} subalgebra (containing the constant function) of real-valued function [e.g. \cite{HS}, Page 71].  For more information on compactification theory,  we refer to \cite{Ch}, \cite{GJ}, \cite{Wa}, \cite{HS}.

Recall that a compact, connected, locally connected, metric space is refereed to as a Peano space.
\begin{theo}
$M(\natural)$ is a Peano space and consequently $M(\natural)$ is a boundary (i.e. remainder) of $M$. Moreover, any compactification  with $M(\natural)$ as boundary (i.e. remainder) is strictly smaller than the Stone-\v{C}ech compactification.
\end{theo}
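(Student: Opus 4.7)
The plan is to establish Theorem~2 in three phases, building to the statement that $M(\natural)$ is a Peano space realized as a remainder strictly smaller than $\beta M$.

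\emph{Phase~1 (compactness and metrizability).} Normalize each $dl$-function representative by $h(y_0)=0$ at a basepoint $y_0$. The family of such representatives is $1$-Lipschitz with $|h(x)|\le d(x,y_0)$, hence equicontinuous and pointwise bounded, so Arzel\`a--Ascoli gives relative compactness in the compact-open topology. A diagonal argument then establishes closedness: for $h_k\to h$ with each $h_k=\lim_n[d(\cdot,K_n^k)-d(y_0,K_n^k)]$, extract $L_m=K_{n_m}^{k_m}$ with $d(y_0,L_m)\to\infty$ such that $[d(\cdot,L_m)-d(y_0,L_m)]\to h$, realizing $h$ as a $dl$-function. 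The $\sigma$-compactness of $M$ makes the compact-open topology metrizable, providing a metric $\rho$ on $M(\natural)$.

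\emph{Phase~2 (connectedness and local connectedness --- the main work).} By Theorem~1, identify $M(\natural)$ with viscosity solutions of $|\nabla u|_g=1$ modulo constants. For any two such $h_0,h_1$ and any $C>0$, set
\[
h_s^C := \min\bigl(h_0+sC,\,h_1+(1-s)C\bigr),\qquad s\in[0,1].
\]
Then $h_s^C$ is itself a viscosity solution of the eikonal: the subsolution property follows because the minimum of $1$-Lipschitz functions is $1$-Lipschitz, while the supersolution property follows because the minimum of two viscosity supersolutions is a supersolution (since $D^-\min\subseteq D^-u_i$ whenever the minimum equals $u_i$). Hence $[h_s^C]\in M(\natural)$ by Theorem~1, and $s\mapsto[h_s^C]$ is a continuous path. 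The endpoints $[h_i^C]\to[h_i]$ as $C\to\infty$ because on each compact set the constant shift eventually dominates. Since $\{[h_s^C]\}_{s\in[0,1],\,C\ge 1}$ is a connected subset of $M(\natural)$ whose closure contains both $[h_0]$ and $[h_1]$, these lie in the same connected component, giving connectedness. The key estimate for local connectedness is that $\min(a,\cdot)$ is range-non-expanding, so on every compact $K$,
\[
\inf_c \sup_K \bigl|h_s^C-h_0-c\bigr|\;\le\;\inf_c \sup_K\bigl|h_1-h_0-c\bigr|,
\]
uniformly in $(s,C)$. This yields $\rho([h_s^C],[h_0])\le\rho([h_1],[h_0])$; the union over $[h_1]\in B_\rho([h],\epsilon)$ of the connected sets $\{[h_s^C]\}_{s,C}$ is then a connected subset of $\bar B_\rho([h],\epsilon)$ containing $B_\rho([h],\epsilon)$, yielding a connected neighborhood of $[h]$ inside any prescribed neighborhood.

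\emph{Phase~3 (realization as remainder, strict smallness).} Once $M(\natural)$ is shown to be a Peano space, Magill's theorem on remainders of Hausdorff compactifications applies: a compact Hausdorff space arises as a remainder of some compactification of our $M$ iff it is a continuous image of $\beta M\setminus M$. Since $M$ contains a closed countable discrete subset, $\beta\mathbb{N}\setminus\mathbb{N}$ embeds as a retract of $\beta M\setminus M$ and surjects continuously onto every compact metric space, hence in particular onto $M(\natural)$. For strict smallness relative to $\beta M$: any compactification equivalent to $\beta M$ with remainder $M(\natural)$ would induce a homeomorphism $\beta M\setminus M\cong M(\natural)$; but $\beta M\setminus M$ has cardinality $2^{2^{\aleph_0}}$ and is non-metrizable, while $M(\natural)$ is metric, a contradiction. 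The main technical obstacle lies in Phase~2, both in checking min-stability of viscosity solutions in the Riemannian setting and in promoting the compact-open estimate to yield genuine connected neighborhoods rather than merely path-connected subsets.
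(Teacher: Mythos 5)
Your overall architecture coincides with the paper's: compactness and metrizability via normalization at a basepoint plus Arzel\`a--Ascoli; connectedness and connectedness im Kleinen via the one-parameter family of minima $\min\{u+t,v\}$ of viscosity solutions; Magill's theorem to realize the Peano space as a remainder; and a topological distinction between $M(\natural)$ and $\beta M\setminus M$ for strictness. Two of your choices genuinely differ and are, if anything, more economical. First, you prove min-stability of viscosity solutions directly (the $1$-Lipschitz property gives the subsolution half, $D^-\min\subseteq D^-u_i$ gives the supersolution half), whereas the paper routes this through local semiconcavity and a density argument using [\cite{CS}, Propositions 1.1.3, 3.3.1, 3.3.4]. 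Second, for strictness you use that $\beta M\setminus M$ contains $\beta\mathbb{N}\setminus\mathbb{N}$ and hence is non-metrizable of cardinality $2^{2^{\aleph_0}}$, while $M(\natural)$ is compact metric; the paper instead invokes Woods' theorem [\cite{Wo}] that $\beta M\setminus M$ is nowhere connected im Kleinen. Both work; yours is more elementary. On the worry you raise at the end of Phase 2: connectedness im Kleinen at every point (a \emph{connected set} containing a neighborhood, sitting inside any prescribed neighborhood) already implies local connectedness by [\cite{Wi}, 27.16], so no open connected neighborhood is required --- you only need to adjoin the limit endpoints $[h]$ and the points $[h_1]$ (equivalently, the ball $B_{\rho}([h],\epsilon)$ itself) to the union of the min-paths, exactly as the paper does, since each path accumulates at both of its nominal endpoints but need not contain them.

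The one genuine gap is in Phase 3. Magill's characterization requires a continuous surjection from \emph{all of} $\beta M\setminus M$ onto $M(\natural)$; a surjection defined only on the subspace $\beta\mathbb{N}\setminus\mathbb{N}$ does not suffice unless it extends, and your assertion that $\beta\mathbb{N}\setminus\mathbb{N}$ is a \emph{retract} of $\beta M\setminus M$ is unjustified and not a standard fact. The standard repair --- essentially the content of [\cite{Mag}, Corollary (2.3)], which the paper simply cites --- is: take a proper map $M\to[0,\infty)$, e.g.\ $x\mapsto d(x_0,x)$ (proper by Hopf--Rinow); being perfect, it extends to a surjection of $\beta M\setminus M$ onto $\beta[0,\infty)\setminus[0,\infty)$; the latter maps continuously onto $[0,1]$, which by Hahn--Mazurkiewicz maps onto any nondegenerate Peano space. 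With that substitution your Phase 3 closes, and the rest of your argument is sound.
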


In this paper we use $\#$ to denote the cardinality of a set. For the initial relations among the three kinds of (ideal) boundaries, we have
\begin{theo}
1). If $\#(M(\infty))=1$, then $\#(M(\natural))=1$.

2). If  $\#(M(\infty))=1$, then any viscosity solution to the Hamilton-Jacobi equation
$$| \nabla u|_g=1$$
attains its maximum at a compact subset of $M$.  Consequently, this Hamilton-Jacobi equation admits no $C^1$ solutions.
\end{theo}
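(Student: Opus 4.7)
The plan is to deduce both parts from Theorem~1, which identifies viscosity solutions with $dl$-functions up to additive constants, together with the standard existence of calibrated descending rays for the eikonal equation.

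For part~(1), fix a viscosity solution $u$ and write $u(x)=\lim_n[d(x,K_n)-d(y_0,K_n)]$ up to a constant, for some closed sets $K_n$ with $d(y_0,K_n)\to\infty$. For each $x\in M$ pick $x_n\in K_n$ realising $d(x,K_n)$; passing to a subsequence, the minimising geodesics from $x$ to $x_n$ converge to a ray $\gamma_x$ based at $x$. A triangle-inequality sandwich gives $d(\gamma_x(t),K_n)=d(x,K_n)-t+o_n(1)$, which, inserted into the limit defining $u$, yields $u(\gamma_x(t))=u(x)-t$ for every $t\ge 0$. Combined with the $1$-Lipschitz property of $u$, this forces $u(y)\le u(x)-t+d(y,\gamma_x(t))$ and, letting $t\to\infty$,
\[
u(y)-u(x)\le b_{\gamma_x}(y).
\]
Now $\#(M(\infty))=1$ says that every Busemann function equals a fixed representative $b$ up to an additive constant; since $b_{\gamma_x}(x)=0$, this identifies $b_{\gamma_x}$ with $b-b(x)$. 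The inequality becomes $u(y)-b(y)\le u(x)-b(x)$, and interchanging $x$ and $y$ reverses it, so $u-b$ is constant. Hence every viscosity solution differs from $b$ by a constant and $\#(M(\natural))=1$.

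For part~(2) it then suffices to show that $b$ itself attains its maximum on a compact set. Assume for contradiction that there is a sequence $y_n$ leaving every compact set with $b(y_n)\to\sup b$. After a subsequence, the functions $d(\cdot,y_n)-d(y_0,y_n)$ converge on compacta to a horo-function $h$ with $h(y_0)=0$, and part~(1) forces $h=b$. Evaluating at $\gamma_0(t)$, where $\gamma_0$ is a fixed ray from $y_0$, gives $d(\gamma_0(t),y_n)=d(y_0,y_n)-t+o(1)$; hence the minimising segments $\sigma_n$ from $y_0$ to $y_n$ converge to $\gamma_0$ on compact subintervals and $b(\sigma_n(t))\to -t$ for each fixed $t$. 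But $b(\sigma_n(d(y_0,y_n)))=b(y_n)\to\sup b$, so $b\circ\sigma_n$ must first descend at unit rate and then rise back to near-supremal values. Bootstrapping from the semi-concavity of $b$ in~\cite{CC} to propagate the initial descent beyond any fixed compact interval, one extracts arbitrarily long sub-arcs of $\sigma_n$ along which $b$ increases at unit rate; a pointed-limit construction then produces a ray $\eta$ with $b(\eta(s))=b(\eta(0))+s$. Direct computation gives $b_\eta(\eta(s))=-s$, while $\#(M(\infty))=1$ forces $b_\eta=b-b(\eta(0))$ and so $b_\eta(\eta(s))=+s$: contradiction. The $C^1$ statement is then immediate, since a $C^1$ solution of $|\nabla u|_g=1$ would have $\nabla u(x^*)=0$ at any maximum $x^*$, violating the eikonal equation.

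The technical crux is the limit-ray step in part~(2): the long calibrated sub-arcs are based at points that leave every compact set, and extracting a genuine ray in the limit requires either a pointed Gromov--Hausdorff (asymptotic-cone) construction or a more delicate direct argument globalising the semi-concavity estimates along $\sigma_n$.
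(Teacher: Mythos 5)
Your part (1) is correct and takes a genuinely different route from the paper. The paper argues pointwise almost everywhere: at a common differentiability point $x$ of $f$ and $b_\gamma$, the backward calibrated ray of $f$ at $x$ must be a coray to $\gamma$ (singleton hypothesis), and by differentiability of $b_\gamma$ it is the \emph{unique} such coray, so $\nabla f(x)=\nabla b_\gamma(x)$ a.e.\ and a Fubini argument makes $f-b_\gamma$ constant. You instead produce a calibrated ray $\gamma_x$ at \emph{every} point from the $dl$-representation and combine $u(\gamma_x(t))=u(x)-t$ with the $1$-Lipschitz bound to get $u(y)-b(y)\le u(x)-b(x)$ for all $x,y$, whence equality by symmetry. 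This avoids the measure-theoretic step entirely and I would accept it as written (modulo noting that nearest points in $K_n$ exist by Hopf--Rinow).

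Part (2), however, has a genuine gap, precisely at the step you flag. First, the extraction of ``arbitrarily long sub-arcs of $\sigma_n$ along which $b$ increases at unit rate'' is not justified and is false in general: on $[T,L_n]$ with $L_n=d(y_0,y_n)\to\infty$, the $1$-Lipschitz function $b\circ\sigma_n$ must rise only by about $\sup b-b(y_0)+T$, spread over a parameter interval of length $L_n-T\gg T$, so its average rate of increase tends to $0$; local semi-concavity only bounds $(b\circ\sigma_n)''$ from above, which does nothing to prevent the derivative from drifting slowly from $-1$ back to positive values, so no long unit-rate increasing arc is forced. Second, even granting such arcs, their basepoints leave every compact set, so no subsequence converges to a ray \emph{in} $M$; a pointed Gromov--Hausdorff limit lives in a different space on which neither $b$ nor the hypothesis $\#(M(\infty))=1$ says anything, so the intended contradiction between $b_\eta(\eta(s))=-s$ and $b_\eta(\eta(s))=+s$ cannot be reached. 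The paper sidesteps both problems by anchoring: for each of the three claims (boundedness above, attainment of the supremum, boundedness of the maximum set) it joins the \emph{fixed} point $x_0$ to the escaping sequence by minimal segments, extracts a limit ray $\gamma'$ emanating from $x_0$ --- which exists in $M$ exactly because the basepoint does not move --- and contradicts the unit-rate decrease of $b_\gamma$ along any ray representing the same class of $M(\infty)$. Replacing your limit-ray step by this anchored construction repairs the argument; your concluding $C^1$ observation is fine.
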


For a noncompact topological space, following Freudenthal \cite{Fr} we could define its (topological) ends.  In our special setting of noncompact, complete Riemannian manifold, we could define  them by rays [\cite{Ab}, III 2].  An equivalence class of cofinal (here two rays $\gamma_1$ and $\gamma_2$ are called to be cofinal if  for any compact subset $K$, there exists $t_K >0$ such that $\gamma_1(t_1)$ and $\gamma_2(t_2)$ lie in the same connected component of $M \setminus K$ for all $t_1, t_2 \geq t_K$ ) rays is called an end of $M$.  Let $\mathscr{E}(M)$ be the set of ends, equipped with the natural topology.  It is known that $\mathscr{E}(M)$ is a totally disconnected Hausdorff space, and it is exactly a kind of boundary (i.e. remainder) with respect to the Freudenhal end compactification  in the sense of Remark 0.1. For more details on ends theory, we refer to \cite{Fr}, \cite{Ho},\cite{Ab}, \cite{Gu}, \cite{Ei}.

\begin{rem}
Different from $\mathscr{E}(M)$, the ideal boundaries $M(\infty)$, $M(\partial)$ and $M(\natural)$ depend on the Riemannian metric.
\end{rem}

For a ray, it could represent either an element of the (metric) ideal boundary $M( \infty)$ or an element of the (topological) $\mathscr{E}(M)$, thus connect these two objects. To state results along this line, we first introduce some notations.

Given a  ray $\gamma$, we could define its coray as follows.  A ray $\gamma^{\prime}:[0, \infty) \rightarrow M$ is called to be a coray to $\gamma$ if there exist a sequence  $x_k \rightarrow \gamma^{\prime}(0)$, a sequence  $t_k \rightarrow \infty$  and a sequence of minimal geodesic segments $\gamma_k :[0, d(x_k, \gamma(t_k)] \rightarrow M$ connecting $x_k$ and $\gamma(t_k)$ such that $\gamma_k$ converge to $\gamma^{\prime}$ uniformly on any compact interval of $[0, \infty)$.  Now we have

\begin{theo}
1) [\cite{C}, Theorem 7].  For any ray $\gamma$, if $\gamma^{\prime}$ is a coray to $\gamma$, then $\gamma$ and $\gamma^{\prime}$ are cofinal.

2). If $\#(\mathscr{E}(M)) \geq 3$, then for any Riemannian metric $g$ on $M$, the assocaited Hamilton-Jacobi equation
$$| \nabla u|_g=1$$
admits no $C^1$ solutions.
\end{theo}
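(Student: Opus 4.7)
Part 1 of the theorem is quoted from [C, Theorem 7], so my proposal addresses only part 2, which I would prove by contradiction: assuming a $C^1$ solution $u$ of $|\nabla u|_g = 1$ exists, I would show $\#(\mathscr{E}(M)) \leq 2$.

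The first step is to reveal the geodesic foliation hidden in any such $u$. The field $\nabla u$ is a continuous unit vector field, so by Peano together with the completeness of $(M,g)$ it admits integral curves $\sigma_x : \mathbb{R} \to M$ defined for all time. Along $\sigma = \sigma_x$ one has $\frac{d}{dt} u(\sigma(t)) = |\nabla u|_g^2 = 1$, whence $u(\sigma(t)) = u(x) + t$. The identity $|\nabla u|_g = 1$ makes $u$ a $1$-Lipschitz function, so $d(\sigma(s), \sigma(t)) \geq |t-s|$; combined with the unit-speed upper bound, this forces equality, and each $\sigma_x$ is a globally minimizing unit-speed geodesic line. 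Because such a geodesic is determined by $(x, \nabla u(x))$, classical geodesic uniqueness promotes Peano existence to a unique, continuously varying flow $\phi_t(x) := \sigma_x(t)$.

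The second step is the product decomposition and the end count. Since $\nabla u$ never vanishes, $N := u^{-1}(0)$ is a non-empty $C^1$ codimension-one submanifold (non-emptiness follows from the surjectivity of $u$ along any orbit), and the map $\Phi : N \times \mathbb{R} \to M$, $(x,t) \mapsto \phi_t(x)$, is a homeomorphism with continuous inverse $y \mapsto (\phi_{-u(y)}(y), u(y))$; in particular, $N$ is connected because $M$ is. To count ends of $M \cong N \times \mathbb{R}$, I would fix a compact exhaustion $\{N_n\}$ of $N$ and use $K_n := N_n \times [-n,n]$ as a compact exhaustion of $M$. If $N$ is compact, then for large $n$, $M \setminus K_n = N \times (n,\infty) \sqcup N \times (-\infty,-n)$ has exactly two unbounded components, giving $\#\mathscr{E}(M) = 2$. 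If $N$ is non-compact, any $y \in N \setminus N_n$ supplies a path $\{y\} \times \mathbb{R} \subset M \setminus K_n$ joining $N \times (n,\infty)$ to $N \times (-\infty,-n)$, so $M \setminus K_n$ is connected and $\#\mathscr{E}(M) = 1$. Either case contradicts $\#\mathscr{E}(M) \geq 3$.

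The main difficulty lies in the first step, namely upgrading Peano existence for the merely $C^0$ field $\nabla u$ to a well-defined, continuous global flow; the key trick is to trade the missing Lipschitz hypothesis for geometric rigidity by recognizing that integral curves are forced to be minimizing geodesics, where uniqueness and continuous dependence on initial data come for free.
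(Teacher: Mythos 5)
Your proof of part 2) is correct, but it takes a genuinely different route from the paper's. Both arguments begin with the same observation that the integral curves of $\nabla u$ are unit-speed minimizing geodesic lines along which $u$ grows linearly, but they diverge in two ways. First, to obtain a well-defined flow the paper does not use your geodesic-rigidity trick: it notes that $u$ and $-u$ are both viscosity solutions, hence both locally semi-concave, so $u$ is locally $C^{1,1}$ by [\cite{CS}, Corollary 3.3.8] and $\nabla u$ is locally Lipschitz, giving classical ODE uniqueness. Your observation that uniqueness and continuous dependence already follow from the fact that every integral curve must equal $t \mapsto \exp_x(t\nabla u(x))$ is a genuine simplification that bypasses the $C^{1,1}$ upgrade entirely. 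Second, and more substantially, the paper never establishes the product structure $M \cong u^{-1}(0)\times\mathbb{R}$. Instead it shows that all integral lines join the same two ends $\mathcal{E}_-,\mathcal{E}_+$ (by pushing a path joining two base points with the flow and noting that its image eventually leaves every compact set), then takes a ray $\zeta$ into a third end, considers the integral line through a point $\zeta(T+S)$ far from the separating compact set $K$, and derives a contradiction because that line must meet $K$ at two points of mutual distance greater than $2\,\mathrm{diam}(K)$. Your global homeomorphism $\Phi: N\times\mathbb{R}\to M$ is cleaner and yields the sharper conclusion $\#(\mathscr{E}(M))\in\{1,2\}$, with the dichotomy governed by compactness of the level set $N$, whereas the paper's metric argument only excludes $\#(\mathscr{E}(M))\geq 3$. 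One minor remark: you do not actually need $N$ to be a $C^1$ submanifold for the end count; it suffices that $N$ is a closed, connected, $\sigma$-compact subset of $M$, which it is, so that step can be stated more robustly.
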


The significance  of   results in the present paper is that it builds some  interesting  connections among  these three kinds of objects: viscosity solutions \cite{Li}, \cite{CS};  ideal boundaries $M(\infty)$, $M(\partial)$ and  $M(\natural)$ \cite{Gr1}, \cite{Gr2}; the set of (topological) ends $\mathscr{E}(M)$  \cite{Fr} or more general remainders in compactification theory.
  It looks  at the first glimpse that they  belong somehow to  different fields. By the results here, to understand the global property of viscosity solutions of Hamilton-Jacobi equation $(*)$, it is necessary to study deeply the structures of $M(\infty), M(\partial),M(\natural), \mathscr{E}(M)$ and the relations among them. Also it would be interesting if one could relate deeply the structure of these sets to the geometric properties of the Riemannian metric (For Hardmard manifolds,  Manifolds of negative curve  or Riemannian manifolds which are convex at infinity, progress are fruitful (e.g. \cite{Ba},  \cite{DPS}, \cite{Yi}); for general cases, very little is known) or the dynamics of the geodesic flow. The results in the present paper may be regarded as initial progress and we hope to come back to this issue in the future.

We will divide the proof of Theorem 1 into two parts and leave it to the following two sections. In section 3, we provide some simple applications of Theorem 1. In section 4, we will analysis the topological structure of $M(\natural)$ and prove Theorem 2. In section 5,  we will consider some consequences when $M(\infty)$ is a singleton and prove Theorem 3. In section 6, we consider  some initial relations among  ends, ideal boundaries and viscosity solutions and thus prove Theorem 4.

\section{$dl$-functions are viscosity solutions}

In \cite{CC}, that Busemann functions are viscosity solutions is proved in details, but for $dl$-functions (or horo-functions), the result is stated as a corollary [\cite{CC}, Corollary 7.2] without details. Although the proof is almost the same to the case of Busemann functions, we still give a relatively detailed sketch as follows, for the sake of completeness.

Assume that $f$ is a $dl$-function, i.e.  there exists a sequence of closed subsets $K_n$, with $d(x_0, K_n) \rightarrow \infty$ for some fixed point $x_0 \in M$ such that
$$[d(\cdot, K_n)-d(x_0,K_n)] \rightarrow f$$
in the compact-open topology. Then, by the following steps, we could prove that $f$ is a viscosity solution and thus is locally semi-concave with linear modulus.

\emph{Step 1:} By [\cite{MM}, Theorem 3.1, Proposition 3.4], for any nonempty closed subset $K_n$, $d(x,K_n)-d(x_0,K_n)$ is a viscosity solution with respect to the Hamilton-Jacobi equation $(*)$. 

\emph{Step 2:} Since $d(x_0, K_n) \rightarrow \infty$ and $[d(\cdot, K_n)-d(x_0,K_n)] \rightarrow f$ in the compact-open topology (in our case, the compact-open  topology coincides with the topology of uniform convergence on compacta), by the stability of viscosity solutions [\cite{CS}, Theorem 5.2.5], $f$ is a global viscosity solution to the Hamilton-Jacobi equation $(*)$.

\emph{Step 3:} Since Hamilton-Jacobi equations $(*)$ and
$$|du|^2_g=1 \,\,\,\,\,\, (**)$$ admit the same set of viscosity solutions, we may also regard $f$ as  a viscosity solution to the Hamilton-Jacobi equation $(**)$. Since $(**)$ is induced by the locally uniformly convex Hamiltonian $H(x,p):=|p|^2_g$,
$f$, as a viscosity solution of a locally uniformly convex Hamilton-Jacobi equation,  must be locally semi-concave with linear modulus [\cite{CS}, Theorem 5.3.6].

\section{ Viscosity solutions are $dl$-functions up to a constant}

Given a viscosity solution $f$ to the Hamilton-Jacobi equation $(*)$, we will show that $f$ is a $dl$-function
for a suitable sequence of closed subsets $K_n$ up to a constant. The crucial point is to choose a suitable sequence of closed subsets $K_n$.

For any $a \in \mathbb{R}$, let $H_a:=\{x: f(x)=a\}$.  Choose any fixed point $x_0 \in M$ and assume that $f(x_0)=a_0 \in \mathbb{R}.$  Let $K_n:=H_{-n}$, we will show that the sequence of closed subsets  $K_n$ will be the one we are looking for.

Firstly, $K_n$ is a closed subset for any $n \in \mathbb{Z}^+$, since $f$ is a continuous (in fact, Lipschitz) function.  Also note that $K_n$ is non-empty for all suitable large $n$.  In the following we will show that
$$[d(\cdot, K_n)-d(x_0,K_n)] \longrightarrow f  $$ in the compact-open topology.

Now we will prove that for any two real numbers $a_1 > a_2 \in \mathbb{R}$, for any point $x \in H_{a_1}$, $y \in H_{a_2}$, $d(x,H_{a_2})=a_1-a_2$.   Choosing a reachable (unit) vector $v \in TM_x$ (i.e. there exists a sequence $x_i \rightarrow x$ such that $f$ is differentiable at $x_i$ and $\nabla f(x_i) \rightarrow v$), there exists a unique minimal geodesic segment $\gamma:(-\infty, 0]$ with $ \gamma(0)=x$ and $\dot{\gamma}(0)=v$ and $f(\gamma(t))-f(x)=t$, since for the  Hamilton-Jacobi equation whose Hamiltonian is locally uniformly convex, viscosity solutions coincide with variational (minmax) solutions \cite{Zh1}, \cite{Zh2}.   Thus, there exists a unique $t_0$ such that $f(\gamma(t_0))=a_2$. It is easy to see that $t_0=a_2-a_1$ and
$$d(x, \gamma(t_0))=length(\gamma|[t_0,0])=|t_0|=-t_0.$$ Since $\gamma(t_0) \in H_{a_2}$, we get $d(x, H_{a_2})\leq -t_0=a_1-a_2.$  If $a_3:=d(x, H_{a_2})<a_1-a_2$,  there exists a minimal geodesic segment $\xi:[-a_3,0] \rightarrow M$  with $\xi(-a_3) \in H_{a_2}$ and $\xi(0)=x \in H_{a_1}$.  Then $f \circ \xi:[-a_3,0] \rightarrow \mathbb{R} $ is still a Lipschitz function,  thus  differentiable with respect to the 1-dimensional Lebesgue measure.   So  we get
\begin{eqnarray*}&&a_1-a_2\\
&=&f(\xi(0))-f(\xi(-a_3))\\
&=&\int^0_{-a_3} g(\nabla f, \dot{\xi})dt \\
&\leq & length (\xi|_{[-a_3,0]}) \,\,\,\,\,\,\,\,\,\,\,   (\text{ Since $|\nabla f|_g \leq 1$})\\
&=& a_3\\
&<& a_1-a_2.
\end{eqnarray*}
This contradiction proves $d(x, H_{a_2})=a_1-a_2$.

By the discussions above, for any compact subset  $S$, there exists a constant $n_S >0$ such that for any $n > n_S$,
$$-n < min_{x \in S}f(x),\,\,\,\,\,\,\,\, -n <f(x_0) .$$

Then for any $x \in S$ and any $n >n_S $,
$$d(x, K_{n})=d(x, H_{-n})=f(x)+n$$
and
$$d(x_0, K_{n})=d(x_0, H_{-n})=f(x_0)+n.$$

It means that
$$d(x,K_n)-d(x_0,K_n)=f(x)-f(x_0)$$ for any $x \in S$ and any $n > n_S$.   This is to say, $$[d(x, K_n)-d(x_0,K_n)] \longrightarrow f(x)-f(x_0)$$ in the compact-open topology.

\begin{rem}
By the proof of Theorem 1, we could obtain this fact: If $f$ is a viscosity solution and there exists a point $x_0$ such that $f(x_0)=0$, then $f$ itself is a $dl$-function. In other words, it is not necessary to add a constant in this case.
\end{rem}
\begin{rem}
In [\cite{Gr2}, Page 202], $dl$-function is defined in a slightly different form: a  function $f$ is a $dl$-function if $$f(x)=t+d(x, f^{-1}(-\infty,t])$$ for all $t \in \mathbb{R}$ and those $x \in M$ where $f(x) \geq t$. By the procedure of the proof of Theorem 1 in section 2, this definition coincides with the one we used up to a constant, thus we could use either of them to define the ideal boundary $M(\natural)$.  If we prefer the original one, Theorem 1 could be restated as: $f$ is a viscosity solution of Hamilton-Jacobi equation $(*)$ if and only if $f$ is a $dl$-function.  Also, the definition of $dl$-function is also slightly different from the one in \cite{Mar}, where an element in $M(\natural)$ is called to be a $dl$-function.
\end{rem}

Combining the contents of section 1 and section 2, Theorem 1 is proved.

\section{Some applications}
By Theorem 1,  $M(\natural)$ could be redefined as

$$M(\natural)=\{\text{viscosity solutions}\}/\{\text{constant functions}\}.$$
Thus, $M(\natural)$  should inherit some properties from  viscosity solutions. Here we collect some well-known  ones, which will be useful later.

\begin{cor}[{[\cite{Mar}, Lemma 4.5]}]
If $f_1,f_2$ are two $dl$-functions, then $\min (f_1,f_2)$ is still a $dl$-function up to a constant.
\end{cor}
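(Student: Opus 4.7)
The plan is to reduce the statement to a stability property of viscosity solutions via Theorem 1. Since $f_1$ and $f_2$ are $dl$-functions, Theorem 1 tells us they are (up to an additive constant) viscosity solutions of the Hamilton-Jacobi equation $(*)$. If we can show that $u:=\min(f_1,f_2)$ is again a viscosity solution of $(*)$, then a second application of Theorem 1 immediately yields that $u$ is a $dl$-function up to a constant.

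To verify that $u$ is a viscosity subsolution, I would first observe that every $dl$-function is $1$-Lipschitz (this is inherited from the distance functions $d(\cdot,K_n)$ that define it, and it is reconfirmed by the bound $|\nabla u|_g\leq 1$ for viscosity solutions of $(*)$). Hence $u=\min(f_1,f_2)$ is also $1$-Lipschitz. A direct check with the test-function definition shows that any $1$-Lipschitz function is a viscosity subsolution of $(*)$: indeed, if $\phi\in C^1$ and $u-\phi$ has a local maximum at $x_0$, then $\phi(x)-\phi(x_0)\geq u(x)-u(x_0)\geq -d(x,x_0)$ in a neighbourhood of $x_0$, and letting $x\to x_0$ along any unit-speed geodesic gives $\langle\nabla\phi(x_0),v\rangle\geq -1$ for every unit $v$, hence $|\nabla\phi(x_0)|_g\leq 1$.

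To verify that $u$ is a viscosity supersolution, I would invoke the classical fact that the pointwise minimum of finitely many viscosity supersolutions of $H(x,Du)=0$ is again a viscosity supersolution. The argument is immediate from the definition: suppose $\phi\in C^1$ and $u-\phi$ has a local minimum at $x_0$, and choose $i\in\{1,2\}$ with $f_i(x_0)=u(x_0)$; then since $u\leq f_i$ everywhere, the function $f_i-\phi$ also has a local minimum at $x_0$, so the supersolution property of $f_i$ gives $|\nabla\phi(x_0)|_g\geq 1$.

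Combining the two steps, $u$ is a viscosity solution of $(*)$, and Theorem 1 finishes the proof. There is no serious obstacle; the only point worth flagging is the asymmetry between the two halves, since subsolution preservation uses the Lipschitz characterization (specific to the eikonal form of $(*)$), while supersolution preservation is a purely formal consequence of the test-function definition and works for any Hamiltonian. Attempting instead a direct construction via, say, the union $K_n^1\cup K_n^2$ of the defining closed sets is awkward because the normalizing constants $d(y,K_n^i)$ for $i=1,2$ need not be comparable, so the viscosity-solution route is the clean one.
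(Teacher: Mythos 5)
Your proof is correct, and while it follows the same overall skeleton as the paper (reduce everything, via Theorem 1, to showing that the minimum of two viscosity solutions of $(*)$ is again a viscosity solution), the key step is argued by a genuinely different method. The paper proves that $\min(f_1,f_2)$ is a viscosity solution by invoking the semiconcavity machinery of Cannarsa--Sinestrari: the minimum of two locally semiconcave functions is locally semiconcave, a locally semiconcave function is a viscosity solution of a convex Hamilton--Jacobi equation as soon as it satisfies the equation at its points of differentiability, and the latter is checked on the open dense set where either $f_1\neq f_2$ or $f_1=f_2$ holds on a neighbourhood, with the remaining differentiable points handled by continuity of reachable gradients. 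You instead work directly with the test-function definition: the supersolution property passes to minima by the standard formal argument (valid for any Hamiltonian), and the subsolution property is recovered from the fact that $\min(f_1,f_2)$ is $1$-Lipschitz and every $1$-Lipschitz function is a viscosity subsolution of the eikonal equation --- both of your verifications are carried out correctly. Your route is more elementary and self-contained, avoiding semiconcavity entirely, and your closing remark correctly identifies why the argument is asymmetric (subsolution preservation under minima is not a general fact and here rests on the Lipschitz characterization special to convex, coercive Hamiltonians of eikonal type); what the paper's route buys instead is that it recycles the semiconcavity framework already established for $dl$-functions and exhibits $\min(f_1,f_2)$ as locally semiconcave with linear modulus, a regularity statement your argument does not directly produce (though it follows a posteriori from Theorem 1).
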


The proof in \cite{Mar} is totally different from the one presented below.
\begin{proof}
If $f_1,f_2$ are two $dl$-functions, then they are viscosity solutions and locally semi-concave with linear modulus.

First we will show that $\min(f_1,f_2)$ is  still a viscosity solution to the Hamilton-Jacobi equation $(*)$ (or equivalently $(**)$).  By [\cite{CS}, Proposition 1.1.3],  it is easy to obtain that $\min(f_1,f_2)$ is still a locally semi-concave function with linear modulus. Thus, by [\cite{CS}, Proposition 5.3.1], we only need to show that Hamilton-Jacobi equation $(**)$ is satisfied at all points of differentiable points of $\min(f_1,f_2)$. Let $M_1:=\{x: f_1(x) \neq f_2(x)\}$ and $M_2:=\{x: f_1(x)=f_2(x)\}$. We denote the interior of $M_2$ by $int(M_2)$.  Let $U:=M_1 \cup int(M_2)$,  then  $\min(f_1,f_2)$ satisfies the Hamilton-Jacobi equation $(**)$ at  its differentiable points  in $U$. Note that $U$ is an open and dense subset, by [\cite{CS}, Proposition 3.3.4 (a)], together with the continuity of $|\cdot|_g$,  $\min(f_1,f_2)$ satisfies  the Hamilton-Jacobi equation $(**)$ at any differentiable point. So far, we know that $\min(f_1,f_2)$ is really a viscosity solution to the Hamilton-Jacobi equation $(**)$ (or equivalently $(*)$). By Theorem 1, this means that $\min(f_1,f_2)$ is a $dl$-function up to a constant.
\end{proof}
 
We  could state Corollary 3.1 alternatively as 

\begin{cor}
If $f_1, f_2$ are two viscosity solutions, then $\min{\{f_1, f_2\}}$ is a  viscosity solution too.
\end{cor}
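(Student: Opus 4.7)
The plan is to observe that Corollary 3.2 is essentially the intermediate conclusion reached mid-way through the proof of Corollary 3.1. Indeed, in that proof one first shows that $\min(f_1,f_2)$ is a viscosity solution of $(*)$, using only that $f_1$ and $f_2$ are locally semi-concave with linear modulus and satisfy $(**)$ at their points of differentiability; the final invocation of Theorem~1 is needed only to promote the output back to a $dl$-function. So the main job is to isolate and re-run that first half of the argument.

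Concretely, given two viscosity solutions $f_1, f_2$ of $(*)$, Step~3 of Section~1 (which applies to any viscosity solution of a locally uniformly convex Hamilton--Jacobi equation) gives that $f_1, f_2$ are locally semi-concave with linear modulus. Then [\cite{CS}, Proposition 1.1.3] yields that $\min(f_1, f_2)$ is locally semi-concave with linear modulus as well. By [\cite{CS}, Proposition 5.3.1], to verify the viscosity solution property for such a function it suffices to check the equation $(**)$ at every point of differentiability. On the open dense subset $U := \{f_1 \neq f_2\} \cup int\{f_1 = f_2\}$ the function $\min(f_1,f_2)$ agrees locally with either $f_1$ or $f_2$, so $(**)$ holds at its differentiability points lying in $U$; then [\cite{CS}, Proposition 3.3.4(a)] together with the continuity of $|\cdot|_g$ propagates the identity to all differentiability points, and $\min(f_1,f_2)$ is a viscosity solution.

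A second, even cleaner route simply uses Theorem~1: every viscosity solution equals a $dl$-function plus a constant, and adding a constant to a viscosity solution of $|\nabla u|_g = 1$ still produces a viscosity solution (the equation is translation-invariant in $u$). Hence $\min(f_1,f_2)$ is the minimum of two functions which differ from $dl$-functions only by constants; Corollary~3.1 applied after absorbing those shifts gives that $\min(f_1,f_2)$ is a $dl$-function up to a constant, and Theorem~1 then returns the viscosity solution property. There is essentially no obstacle: the corollary is a repackaging of Corollary~3.1 under the correspondence \emph{viscosity solutions $\leftrightarrow$ $dl$-functions modulo constants} furnished by Theorem~1.
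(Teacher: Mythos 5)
Your proposal is correct and matches the paper's treatment: Corollary 3.2 is presented there as a mere restatement of Corollary 3.1, whose proof consists precisely of the semi-concavity argument ([\cite{CS}, Propositions 1.1.3, 5.3.1, 3.3.4(a)] on the open dense set $U$) that you isolate as the first half. No gap to report.
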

\begin{cor}
$M(\natural)$ is compact with respect to the quotient compact-open topology.
\end{cor}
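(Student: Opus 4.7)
The plan is to exploit the identification provided by Theorem~1, namely
$$M(\natural) = \{\text{viscosity solutions to } (*)\}/\{\text{constant functions}\},$$
and then use Arzel\`a--Ascoli plus the stability of viscosity solutions. Fix a base point $x_0 \in M$. Every equivalence class in $M(\natural)$ contains a unique representative $f$ with $f(x_0)=0$; call the set of these normalized representatives $\mathcal{V}_0$. The quotient map restricted to $\mathcal{V}_0$ (with the compact-open topology) is a continuous bijection onto $M(\natural)$, so it suffices to show $\mathcal{V}_0$ is compact.

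First I would check that $\mathcal{V}_0$ is equicontinuous and pointwise bounded. Every $f \in \mathcal{V}_0$ is a viscosity solution of $|\nabla u|_g = 1$ and hence, as recalled in Section~1, 1-Lipschitz with respect to $d$. Combined with $f(x_0)=0$ this gives $|f(x)| \le d(x,x_0)$, so on any compact set $\mathcal{V}_0$ is uniformly bounded, and 1-Lipschitzness is precisely the equicontinuity needed. By the Arzel\`a--Ascoli theorem (applied on a compact exhaustion of $M$, using that $M$ is $\sigma$-compact), $\mathcal{V}_0$ is relatively compact in the compact-open topology.

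Next I would close $\mathcal{V}_0$ by invoking stability. If $f_n \in \mathcal{V}_0$ converges to some $f$ uniformly on compacta, then $f_n(x_0)=0$ forces $f(x_0)=0$, and by the stability of viscosity solutions (cited as [\cite{CS}, Theorem 5.2.5] in Step~2 of Section~1) the limit $f$ is again a viscosity solution of $(*)$. Therefore $f \in \mathcal{V}_0$, i.e.\ $\mathcal{V}_0$ is closed in $C(M,\mathbb{R})$ with the compact-open topology. A closed, relatively compact set is compact, so $\mathcal{V}_0$ is compact, and its continuous image $M(\natural)$ under the quotient map is compact as well.

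The only mildly subtle point is verifying that the quotient topology on $M(\natural)$ really agrees with the topology transferred from $\mathcal{V}_0$; this is a general fact about continuous sections of quotient maps by a group action (here by translations of constants), but it must be spelled out to justify pushing compactness of $\mathcal{V}_0$ through to $M(\natural)$. Everything else is routine given Theorem~1 and standard tools about viscosity solutions.
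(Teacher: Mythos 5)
Your proposal is correct and follows essentially the same route as the paper: normalize representatives at a base point $x_0$, use the uniform $1$-Lipschitz bound to apply Arzel\`a--Ascoli, and invoke the stability of viscosity solutions [\cite{CS}, Theorem 5.2.5] to close the family. The only cosmetic difference is in the last step: the paper establishes sequential compactness and then cites the metrizability of $M(\natural)$ [\cite{Mar}, Theorem 4.6] to upgrade it to compactness, whereas you conclude compactness of the normalized family directly in $C(M,\mathbb{R})$ and push it forward through the continuous surjective quotient map --- which, as you note, does not even require the finer point that this map is a homeomorphism onto $M(\natural)$.
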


\begin{proof}
Fix a point $x_0 \in M$ and we represent  elements of $M(\natural)$ by  $dl$-functions $f$ with $f(x_0)=0$. In other words, we identify  $M(\natural)$ with
$$\mathcal{F}:=\{f: f \text{ are $dl$- functions with $f(x_0)=0$}\}.\footnote{$\mathcal{F}$, equipped  with the the compact-open topology, and $M(\natural)$, equipped with the quotient compact-open topology, are homemorphic. In fact, the map $\mathcal{F} \ni f \mapsto f /\sim $ is the homemorphism.}$$
For any sequence $f_n \in \mathcal{F}$, since $f_n$ is uniformly  Lipschitz (with Lipschitz constant 1, thus equi-continuous) and  uniformly bounded on any compact subset, together with the fact that $M$ is hemi-compact [\cite{Ga2}, Theorem 2] with respect to the manifold topology (or equivalently the topology induced by the distance $d$), there exists a subsequence $f_{n_i}$ such that $f_{n_i}$ converge to a continuous function $f$ in the compact-open topology, by Arzela-Ascoli theorem. Further by the stability of viscosity solutions [\cite{CS}, Theorem 5.2.5], $f$  itself is a viscosity solution and $f(x_0)=0$.   Namely, $f \in \mathcal{F}$. So far we have proved that $\mathcal{F}$ (or equivalently $M(\natural)$) is sequentially compact.

By [\cite{Mar}, Theorem 4.6], $M(\natural)$ is metrizable \footnote{Since $M$ is hemi-compact, $\mathcal{C}(M)$, the set of continuous functions on $M$ with the compact-open topology, is metrizable (e.g. [\cite{Wi}, 43G]). The metric  $\rho$ could be defined by 
$$\rho(u,v)=\sum^{\infty}_{n=1} \rho_n(u,v),$$
where $K_n=\overline{B_n(x_0)}$, the closed metric ball centered at some fixed point $x_0$ with radius $n$; $\rho_n(u,v)=\min \{\frac{1}{2^n}, \sup_{x \in K_n}|u(x)-v(x)|\}$.  Thus the set $\mathcal{V}$ of viscosity solutions to the Hamilton-Jacobi equation (*),  as a subspace of $\mathcal{C}(M)$, is metrizable. We consider $\mathbb{R}$ as an additive group, acting on $\mathcal{V}$ by $t u:=t+u$.  Then for each $t \in \mathbb{R}$, $t$ is an isometry and moreover the orbits of the action are closed. By [\cite{CDL}, Theorem 2.1], $M(\natural)$, as a quotient space where the equivalene relation (denoted by $\sim$) is induced by the $\mathbb{R}$-action,  is metizable  by quotient metric $\rho_{\sim}$, here $\rho_{\sim}$ is defined by 
$$\rho_{\sim}(u/\sim, v/\sim)=\inf_{t,s \in \mathbb{R}} \rho(u+t, v+s).$$ Be careful that in general case on a quotient space of  a metric space only quotient pseudo-metric is well defined.} . It is well known in a metric space, sequential compactness and compactness are equivalent (e.g.  [\cite{Ja}, Page 84, Proposition 3]). Thus, $M(\natural)$ is  compact with respect to the quotient compact-open topology.

\end{proof}

Let the horo-function compactification $$\bar{M}(\partial):=closure \frac{\{d(\cdot, x): x \in M\}}{ \{ \text{constant functions} \} },$$ then it is easy to see that $M(\partial)$ is the topological boundary  of the set $\bar{M}(\partial)$, here  ``closure" and ``boundary"  are considered under the quotient compact-open topology. It is known that $\bar{M}(\partial)$ is a compact, metrizable (particularly, Hausdorff) space. Hence  $M(\partial)$ is a closed subset of $\bar{M}(\partial)$. By this fact,   we could get the following well known result.
\begin{cor}
$M(\partial)$ is compact with respect  to the quotient compact-open topology.
\end{cor}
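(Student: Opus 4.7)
The approach is to chain together the two facts already noted in the paragraph preceding the statement: that $\bar{M}(\partial)$ is compact (and metrizable), and that $M(\partial)$ sits inside $\bar{M}(\partial)$ as its topological boundary, hence as a closed subset. Once these are in place, the corollary reduces to the elementary observation that a closed subset of a compact Hausdorff space is compact.

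More concretely, the plan is as follows. First I would fix a basepoint $x_0 \in M$ and, exactly as in the proof of Corollary 3.3, identify the quotient space $\bar{M}(\partial)$ with the set
\[
\mathcal{F}_{\partial} := \overline{\{d(\cdot,x)-d(x_0,x) : x \in M\}}
\]
inside $\mathcal{C}(M)$ equipped with the compact-open topology, where the overline denotes closure. Each function $d(\cdot,x)-d(x_0,x)$ is $1$-Lipschitz and vanishes at $x_0$, so the family is uniformly equicontinuous and pointwise bounded on every compact set. Since $M$ is hemi-compact, Arzel\`a-Ascoli (in the form used in Corollary~3.3) yields that this family has compact closure in the compact-open topology on $\mathcal{C}(M)$. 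The normalization $f(x_0)=0$ makes the natural map from $\mathcal{F}_\partial$ to the quotient $\bar{M}(\partial)$ a homeomorphism, so $\bar{M}(\partial)$ is compact (and metrizable, Hausdorff, as already recorded in the paper).

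Next I would observe that by definition $M(\partial)$ is the topological boundary of $\bar{M}(\partial)$ inside the ambient quotient space of continuous functions modulo constants. The topological boundary of any subset of any topological space is closed, so $M(\partial)$ is a closed subset of the compact Hausdorff space $\bar{M}(\partial)$. A closed subset of a compact space is compact, which finishes the proof.

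The argument is essentially bookkeeping rather than genuinely new content; if anything deserves care, it is the first step, namely the identification of the quotient $\bar{M}(\partial)$ with the normalized family $\mathcal{F}_{\partial}$ so that Arzel\`a-Ascoli can be applied directly. This is exactly the same device used in Corollary~3.3 and requires only that every equivalence class under the additive $\mathbb{R}$-action has a unique representative with $f(x_0)=0$, which is immediate. After that, no further work is needed: compactness of $M(\partial)$ is inherited from compactness of $\bar{M}(\partial)$ via closedness.
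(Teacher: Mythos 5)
Your proposal is correct and follows essentially the same route as the paper: the paper likewise invokes the compactness and metrizability of the horo-function compactification $\bar{M}(\partial)$, the fact that $M(\partial)$ is its topological boundary (hence closed), and the elementary fact that a closed subset of a compact space is compact. The only difference is that you supply the Arzel\`a--Ascoli argument for the compactness of $\bar{M}(\partial)$, which the paper simply records as known.
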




\section{Proof of Theorem 2}
For the simiplicity of notations,  in this section we use $\sim$ (as in  footnote 6) to denote the equivalence relation  where two continuous functions are equivalent if they differ a constant. Now we will prove that $M(\natural)$ is a Peano space and thus $M(\natural)$ could also be regarded as a boundary (i.e. remainder). We will divide the proof to the following steps.

$\bullet$ \textsl{Compactness.} The compactness of $M(\natural)$ is proved in Corollary 3.3.

$\bullet$ \textsl{Connectedness.}  For any two viscosity solutions $u,v$, let $$f_t:=\min \{u+t,v\}.$$
By Corollary 3.2, for each $t \in \mathbb{R}$, $f_t$ is  a viscosity solution. Thus the map
$$t \rightarrow f_t$$
is a continuous map from $\mathbb{R}$ to $\mathcal{V}$, the set (equipped with the compact-open topology) of viscosity solutions to the Hamilton-Jacobi equation $(*)$. It is also easy to see that $f_t/ \sim  \longrightarrow u/ \sim$ as $t \longrightarrow -\infty$ and $f_t/ \sim \longrightarrow v/ \sim$ as $t \longrightarrow \infty$. This shows that $\mathcal{V}$, and thus $M(\natural)$, is connected.

$\bullet$ \textsl{Local connectedness.} By [\cite{Wi}, 27.16 Theorem], we only need to show that $M(\natural)$ is connected im Kleinen  at every point $u/ \sim \in M(\natural)$. In other words, we have to show that for any $u/\sim \in M(\natural)$ and for any neighborhood $\mathcal{N}$ of $u/\sim$ 
in $ M(\natural)$, there exists a connected neighborhood $\mathcal{M}$ of $u/\sim$ with $ \mathcal{M}  \subset \mathcal{N}$. Firstly, we choose $\epsilon >0$ small enough such that the metric \footnote{Recall that $M(\natural)$ is metrizable by the quotient metric $\rho_{\sim}$ introduced in footnote 6.} ball $B_{\epsilon}(u/\sim)$ is contained in $\mathcal{N}$. Now we construct the connected neigiborhood of $u/\sim$ as following. 

We fix a viscosity solution $u$ as a representation of the class $u/\sim$. For any  $v/\sim \in B_{\frac{\epsilon}{4}}(u/\sim)$,   we fix  a representation of $v/\sim$,  still denoted by $v$,  such that 
$\rho(u,v) <\frac{\epsilon}{4}$. Such a representation does exist  since  the $\mathbb{R}$-action, which induces the quotient equivalence relation, is an isometry (see footnote 6). For such element  $v$, let
$$f_t(u,v):=  \min\{u+t,v \} .$$
It is easy to see that 
$$\rho_{\sim}(f_t(u,v)/\sim, u/\sim ) \leq  \rho(f_t(u,v), u+t) \leq \rho(u,v)$$ for any $t \leq 0$ and  
$$\rho_{\sim}(f_t(u,v)/\sim, v/\sim) \leq  \rho(f_t(u,v), v) \leq  \rho (u,v)$$ for any $t \geq 0$.  Consequently $\rho_{\sim}(f_t (u,v) / \sim ,u/\sim )  <\epsilon$ for any $t \in \mathbb{R}$.
Now we define
$$\mathcal{M}:=\{\cup_{v/\sim  \in B_{\frac{\epsilon}{4}} (u/ \sim), t \in \mathbb{R} } f_t(u,v)\} \cup \{B_{\frac{\epsilon}{4}} (u/ \sim)\},$$
here $v$ is a representation of $v/\sim$ as explained at the begining of this in paragraph. Then we have $\mathcal{M} \subset B_{\epsilon}(u/\sim) \subseteq \mathcal{N}$. By the definition, $\mathcal{M}$ is connected and thus $M(\natural)$ is connected im Kleinen at $u/\sim$. Hence, $M(\natural)$ is locally connected.

$\bullet$ \textsl{Metrizability. } As we said in the proof of Corollary 3.3, the metrizability of $M(\natural)$ is proved in [\cite{Mar}, Theorem 4.6].  See also footnote  3.

By the result [\cite{Mag}, Corollary (2.3)],  any peano space could be  a boundary (i.e. remainder) of  any locally compact, non-compact, metric space.  Thus, $M(\natural)$ is a boundary (i.e. remainder) of $M$. 

Since $M$ is  connected, locally compact but not pseudo-compact, the Stone-\v{C}ech compactifiaction $\beta(M)$ is connected but not locally connected [\cite{HI}, 2.5 Corollary], [\cite{Wa},   9.3.Theorem ]. For the remainder  $\beta(M) \setminus M$,  the realcompactness of $M$ implies that it is not connected im Kleinen at any point [\cite{Wo}, Theorem 5]. Consequently, $\beta(M) \setminus M$ is not locally connected at any point.  On the other hand, since $M(\natural)$ is locally connected,   $M(\natural)$ is not homoemorphic to $\beta(M)\setminus M$, and thus  any  compactification with $M(\natural)$ as boundary (i.e. remainder) is not equivalent to the Stone-\v{C}ech compactification. Since Stone-\v{C}ech compactification is the largest compactification, any compactification with $M(\natural)$ as boundary is strictly smaller than the Stone-\v{C}ech compactification.

\section{the case $M(\infty)$ is a singleton}

 In weak KAM theory \cite{Fa}, it is well known that if there is only one static class for some Aubry set, then the associated Hamilton-Jacobi equation has only one viscosity solution up to a constant. Here we also provide an analogous  property (i.e. Theorem 3) in our setting.
We restate Theorem 3. 1) as

\begin{prop}
If $M(\infty)$ is a singleton, then $M(\natural)$ is a singleton as well.
\end{prop}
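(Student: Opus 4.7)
The strategy is to show that every viscosity solution $f$ of $(*)$ differs from a fixed representative $b$ of the unique class in $M(\infty)$ only by a constant; combined with Theorem 1, this immediately yields $\#(M(\natural))=1$.

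First I would reuse the calibration mechanism of Section 2. Since $f$ is a viscosity solution of a uniformly convex Hamilton-Jacobi equation, $f$ is $1$-Lipschitz and locally semi-concave with linear modulus, so reachable unit gradients exist at every point of $M$. For any $x\in M$ and any reachable unit vector $v\in T_xM$, the argument of Section 2 produces a minimal geodesic $\gamma:(-\infty,0]\to M$ with $\gamma(0)=x$, $\dot\gamma(0)=v$ and $f(\gamma(t))-f(x)=t$ for all $t\leq 0$. Reversing the parameter, $\sigma_x(s):=\gamma(-s)$ is a ray emanating from $x$ along which $f(\sigma_x(s))=f(x)-s$ for every $s\geq 0$.

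Second, applying the $1$-Lipschitz estimate to $\sigma_x(s)$ and $y$ gives $f(y)-f(\sigma_x(s))\leq d(y,\sigma_x(s))$, hence
$$f(y)\leq f(x)-s+d(y,\sigma_x(s)).$$
Letting $s\to\infty$ and using the definition of the Busemann function associated with the ray $\sigma_x$,
$$f(y)\leq f(x)+b_{\sigma_x}(y)\qquad\text{for all }y\in M.$$
Now the hypothesis $\#(M(\infty))=1$ enters: every Busemann function, in particular $b_{\sigma_x}$, differs from $b$ only by a constant. Evaluating at $y=x$, where $b_{\sigma_x}(x)=0$, determines this constant as $-b(x)$, so $b_{\sigma_x}(y)=b(y)-b(x)$. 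Substituting,
$$f(y)-b(y)\leq f(x)-b(x)\qquad\text{for all }x,y\in M.$$
Exchanging the roles of $x$ and $y$ gives equality, so $f-b$ is constant on $M$. Since $f$ was arbitrary, every class in $M(\natural)$ coincides with that of $b$, which proves the proposition.

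The main obstacle, I expect, is ensuring that a calibrated ray $\sigma_x$ exists at \emph{every} point of $M$, not merely at points of differentiability of $f$. This rests on the non-emptiness of the set of reachable gradients at every point of a semi-concave function, combined with the backward-characteristic (variational-solution) theory for uniformly convex viscosity solutions that already underlies Section 2; once this is granted, the rest of the argument is a two-line comparison between $f$ and $b_{\sigma_x}$.
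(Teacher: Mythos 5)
Your proof is correct, but it follows a genuinely different route from the paper's. The paper works infinitesimally: it fixes a ray $\gamma$, takes the full-measure set $D$ where both $f$ and $b_{\gamma}$ are differentiable, argues that at each $x\in D$ the unique calibrated ray of $f$ must be the unique coray to $\gamma$ through $x$ (using that $M(\infty)$ is a singleton together with the characterization of corays via $b_\gamma$), concludes $\nabla f=\nabla b_{\gamma}$ on $D$, and finishes with a Fubini argument for Lipschitz functions. You instead argue globally: from the calibrated ray $\sigma_x$ at an \emph{arbitrary} point $x$ (whose existence is exactly the mechanism the paper already sets up in Section 2 via reachable gradients and the variational-solution theory, so your ``main obstacle'' is already resolved there) you derive the comparison inequality $f(y)\leq f(x)+b_{\sigma_x}(y)$ from $1$-Lipschitzness alone, then use the hypothesis only through the normalization $b_{\sigma_x}(\cdot)=b(\cdot)-b(x)$, and close by symmetrizing in $x$ and $y$. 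Your version buys several things: it avoids the uniqueness of corays at differentiability points of $b_\gamma$, avoids the measure-theoretic Fubini step, and isolates the weak-KAM-style fact that every viscosity solution is dominated by, and calibrated against, Busemann functions --- the singleton hypothesis then forces equality in one line. The paper's version, by contrast, makes visible the dynamical statement that the characteristics of $f$ are precisely the corays to $\gamma$, which is of independent interest for Section 6. Both are complete proofs.
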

\begin{proof}
For  any fixed ray $\gamma$,  we will prove that for any viscosity solution $f$, $f=b_{\gamma}$ up to a constant.  Let $D$ be the set on which both $f$ and $b_{\gamma}$ are differentiable. Clearly, $D$ is of full measure with respect to the Lebesgue measure. Since $f$ is a viscosity solution, for any $x \in D$, there exists a unique ray $\gamma_x:[0, \infty) \rightarrow M$ such that $\gamma_x(0)=x$,  $-\dot{\gamma}_x(0)=\nabla f(x)$ and $f(\gamma_x(t_2))-f(\gamma_x(t_1))=t_1-t_2$ for any $t_1,t_2 \in \mathbb{R}$. Since $M(\infty)$ consists of only one point, $\gamma_x$ must be a coray to $\gamma$. Since $b_{\gamma}$ is differentiable at $x$, $\gamma_x$ is the only coray to $\gamma$. Thus, $\nabla f(x)=\nabla b_{\gamma}(x)=-\dot{\gamma}_x(0)$. Thus we get $\nabla(f-b_{\gamma})=0$ on $D$. Since $D$ is of full measure and $f-b_{\gamma}$ is a Lipschitz function, applying Fubuni's theorem we get $f-b_{\gamma}$ is a constant. So $M(\natural)$ is also a singleton.
\end{proof}

We restate Theorem 3. 2) as

\begin{prop}
If $M(\infty)$ is a singleton, then for any viscosity solution $f$, the maximum of $f$ is obtained at a bounded closed subset. In other words,   $f$ attains its maximum at a compact subset.
\end{prop}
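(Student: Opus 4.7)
The plan is to use Proposition 5.1 to reduce to studying the single function $b_\gamma$, and then to show that the singleton hypothesis forbids any ray along which $b_\gamma$ increases. If $f$ is a viscosity solution then Proposition 5.1 gives $f=b_\gamma+c$ for some constant $c$, so it suffices to show that $b_\gamma$ attains its supremum on a compact subset; I normalise $b_\gamma(\gamma(0))=0$ and set $S:=\sup b_\gamma$.

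The key observation I would establish first is: \emph{under $\#M(\infty)=1$, every ray $\sigma:[0,\infty)\to M$ satisfies $b_\gamma(\sigma(t))=b_\gamma(\sigma(0))-t$.} This holds because the Busemann function $b_\sigma$ of $\sigma$ satisfies $b_\sigma(\sigma(t))=-t$ by definition, the singleton hypothesis forces $b_\sigma-b_\gamma$ to be constant, and evaluating at $\sigma(0)$ identifies this constant as $-b_\gamma(\sigma(0))$. In particular $b_\gamma$ is strictly decreasing with slope $-1$ along every ray, so no ray can carry $b_\gamma$ upward.

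Suppose, for contradiction, that the maximum of $b_\gamma$ is not attained on a compact set. One can then extract $\{y_n\}\subset M$ with $d(x_0,y_n)\to\infty$ and $b_\gamma(y_n)\to S$. At each $y_n$ the proof of Proposition 5.1 furnishes a coray $\gamma_{y_n}$ along which $b_\gamma$ decreases with slope $1$; put $w_n:=\gamma_{y_n}(b_\gamma(y_n))\in H_0:=b_\gamma^{-1}(0)$, so $d(y_n,w_n)=b_\gamma(y_n)$, and let $\sigma_n:[0,b_\gamma(y_n)]\to M$ denote the reversed coray segment, a unit-speed minimising geodesic from $w_n$ to $y_n$ along which $b_\gamma(\sigma_n(s))=s$. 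If the starting points $\{w_n\}$ lie in a compact set, the Arzela-Ascoli theorem (extending $\sigma_n$ by $y_n$ when $S<\infty$, or diagonalising on compact sub-intervals when $S=\infty$) yields a subsequential limit: when $S=\infty$ this limit is a ray along which $b_\gamma$ has slope $+1$, contradicting the key observation; when $S<\infty$ the uniform boundedness of $\sigma_n$ forces $y_n=\sigma_n(b_\gamma(y_n))$ to converge in $M$, contradicting $d(x_0,y_n)\to\infty$. The case where $\{w_n\}$ also escapes to infinity is what I expect to be the main technical obstacle; there one must invoke the singleton of $M(\partial)\subseteq M(\natural)=M(\infty)$ to control the horofunction limits of both $\{y_n\}$ and $\{w_n\}$ and, combined with the $dl$-function identity $d(z,H_t)=b_\gamma(z)-t$ from Section~2, either choose $\{y_n\}$ so that $\{w_n\}$ remains bounded or re-centre the segments $\sigma_n$ about a bounded base before extracting the forbidden upward ray. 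Once the maximum is shown to be attained on a compact set, the consequence on the absence of $C^1$ solutions is immediate, since a $C^1$ solution would satisfy $\nabla u=0$ at its interior maximum, violating $|\nabla u|_g=1$.
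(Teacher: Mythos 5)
Your reduction to $b_{\gamma}$ via Proposition 5.1 and your key observation that the singleton hypothesis forces $b_{\gamma}(\sigma(t))=b_{\gamma}(\sigma(0))-t$ along every ray $\sigma$ are both correct, and the latter is exactly the mechanism the paper exploits. But the proof is not complete: the case you flag as ``the main technical obstacle'' --- the feet $w_n\in H_0$ of your reversed coray segments escaping to infinity --- is not an edge case, it is essentially the only case that occurs. When $S=\sup b_{\gamma}<\infty$ (which is the situation relevant to showing that the supremum is attained and that the set of maximizers is bounded), your segments $\sigma_n$ have length $b_{\gamma}(y_n)\leq S$, so $d(w_n,y_n)$ is uniformly bounded and $\{w_n\}$ is unbounded precisely because $\{y_n\}$ is; the sub-case you do handle (bounded $w_n$, forcing $y_n$ bounded) is therefore vacuous, and the entire finite-$S$ argument rests on the unresolved case. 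When $S=\infty$ there is likewise no reason for the feet on $H_0$ to stay in a compact set, since $H_0$ may well be unbounded. The remedies you sketch (re-centering the segments, or choosing $y_n$ so that $w_n$ stays bounded, via the singleton of $M(\partial)$) are not carried out, so the heart of the proposition is missing.

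The paper sidesteps this difficulty by a different choice of comparison curves: instead of running calibrated segments from $y_n$ down to a level set, whose initial points may escape, it joins the \emph{fixed} basepoint $x_0$ to the escaping maximizing sequence $x_n$ by minimal geodesic segments $\gamma_n$. Since all of these emanate from $x_0$, Arzela-Ascoli always produces a genuine limit ray $\gamma^{\prime}$ issuing from $x_0$, with no escaping-basepoint dichotomy; the contradiction is then drawn from the behaviour of $b_{\gamma}$ along this single ray $\gamma^{\prime}$, which by your key observation must decrease with slope $-1$ if $M(\infty)$ is a singleton, while the construction is arranged so that $b_{\gamma}$ stays large along $\gamma^{\prime}$. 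If you want to salvage your argument, anchor the compactness extraction at a fixed point in this way rather than at the moving feet $w_n$.
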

\begin{proof}
By Proposition 5.1, we only need to show that the Proposition 5.2 is true for some Busemann function.
Fix a ray $\gamma$, we first prove that $b_{\gamma}$ is bounded above. Otherwise, there exists a sequence of points $x_n (n \geq 0)$ such that $b_{\gamma}(x_n) \rightarrow \infty$. Let $\gamma_n:[0, d(x_0,x_n)] \rightarrow M$ be a minimal geodesic segment connecting $x_0$ and $x_n$, then  by taking a subsequence if necessary, $\gamma_n$ $(n \geq 1)$ will convergence uniformly on any conpact time interval to a ray emanating from $x_0$. We denote this ray by $\gamma^{\prime}$. Clearly $\gamma^{\prime}$ and $\gamma$ represent two distinct elements in $M(\infty)$ (just recalling that if $\gamma^{\prime}$ and $\gamma$ represent the same element in $M(\infty)$, then $b_{\gamma}(\gamma(t_1))-b_{\gamma}(\gamma^{\prime}(t_2))=t_2-t_1$ for any $t_1,t_2 \geq 0$) and it contradicts the assumption that $M(\infty)$ is a singleton.

Now we prove that $\max{b_{\gamma}}$ exists.  Otherwise, there exists an undounded sequence of $x_n$ $(n \geq 0)$ such that $b_{\gamma}(x_n) \rightarrow \sup{b_{\gamma}}$. By the same discussion as in the previous  paragraph, we could get a ray $\gamma^{\prime}$, such that $\limsup_{t \rightarrow \infty}b_{\gamma}(\gamma^{\prime}(t))=\sup{b_{\gamma}}$. It implies  that $\gamma^{\prime}$ and $\gamma$ represent two distinct elements in $M(\infty)$ and thus we get a contradiction too.

Now we show that $\{x:b_{\gamma}(x)=\max{b_{\gamma}}\}$ lies in a bounded subset of $M$.  Otherwise, there exists an undounded sequence of $x_n$ $(n \geq 0)$ such that $b_{\gamma}(x_n) = \max{b_{\gamma}}$. By the same discussion as in the previous two paragraphs, we could get a ray $\gamma^{\prime}$, such that $b_{\gamma}(\gamma^{\prime}(t_n))=\max{b_{\gamma}}$  for an  unbounded sequence  $t_n$. It will also imply that $\gamma^{\prime}$ and $\gamma$ represent two distinct elements in $M(\infty)$ and thus we get a contradiction as well.

Since $(M,g)$ is assumed to be complete, by Hopf-Rinow theorem (e.g. \cite{Pe}, Page 137, Theorem 16]), M satisfies the Heine-Borel property, i.e. every bounded closed subset is compact. Thus,  $f$ attains its maximum at a compact subset.

\end{proof}

If $M(\infty)$ is a singleton, then any viscosity solution  attains its maximum at a compact subset, and of course is non-differentiable at the maximum points. In other words, Hamilton-Jacobi equation $(*)$ does not admit $C^1$ solutions.
So far Theorem 3 is proved.

\begin{cor}
If the Hamilton-Jacobi equation $(*)$ admits a $C^1$ solution, then $M(\infty)$ contains at least two elements. More generally, if $M$ admits a line (this is the case, for example, $\#(\mathscr{E}(M)) \geq 2$), then $M(\infty)$ is not a singleton; recall that by definition a line $\gamma: \mathbb{R} \rightarrow M$ is a geodesic such that $d(\gamma(t_1), \gamma(t_2))=|t_1-t_2|$ for ant $t_1,t_2 \in \mathbb{R}$.
\end{cor}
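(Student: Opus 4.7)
The plan is to prove three implications: (i) existence of a $C^1$ solution implies $\#(M(\infty))\geq 2$; (ii) existence of a line implies $M(\infty)$ is not a singleton; and (iii) $\#(\mathscr{E}(M))\geq 2$ implies $M$ admits a line. As a preliminary remark, every complete non-compact Riemannian manifold admits a ray from any basepoint (a standard consequence of Hopf-Rinow applied to a minimizing sequence escaping to infinity), so $M(\infty)$ is always non-empty, and ``not a singleton'' is therefore equivalent to ``at least two elements.''

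For (i) I would argue contrapositively using Proposition 5.2. If $M(\infty)$ were a singleton, then any viscosity solution, and in particular any $C^1$ solution, would have to attain its maximum on a non-empty compact subset of $M$. But a $C^1$ function $f$ with $|\nabla f|_g \equiv 1$ has no critical points, hence no interior maximum, contradicting this. Thus $\#(M(\infty))\geq 2$.

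For (ii), given a line $\gamma:\mathbb{R}\to M$, I would split it into two rays $\gamma^{+}(t):=\gamma(t)$ and $\gamma^{-}(t):=\gamma(-t)$, $t\geq 0$, and show the corresponding Busemann classes are distinct. Using the defining property $d(\gamma(t_1),\gamma(t_2))=|t_1-t_2|$, a direct computation at the point $x=\gamma(s)$ yields $b_{\gamma^{+}}(\gamma(s))=\lim_{t\to\infty}[(t-s)-t]=-s$ and $b_{\gamma^{-}}(\gamma(s))=\lim_{t\to\infty}[(s+t)-t]=s$. Hence $b_{\gamma^{+}}-b_{\gamma^{-}}$ equals $-2s$ along $\gamma$, which is non-constant, so $[b_{\gamma^{+}}]\neq[b_{\gamma^{-}}]$ in $M(\infty)$.

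For (iii) I would invoke the classical line construction underlying the hypothesis of the Cheeger-Gromoll splitting theorem: pick a compact set $K$ separating two distinct ends, choose rays $\sigma_1,\sigma_2$ representing them, connect $\sigma_1(n)$ to $\sigma_2(n)$ by a minimal unit-speed geodesic segment $\tau_n$, which must meet $K$ since its endpoints lie in different components of $M\setminus K$, reparameterize so $\tau_n(0)\in K$, and extract via Arzela-Ascoli (using compactness of the unit sphere at a limit point of $\tau_n(0)$) a limit unit-speed geodesic $\tau:\mathbb{R}\to M$; minimality on each compact subinterval passes to the limit, so $\tau$ is a line. The main and least routine obstacle is precisely here, namely verifying that both halves of the limit $\tau$ have \emph{infinite} length, which follows from $\sigma_i(n)$ escaping to different ends together with the separation property of $K$. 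Parts (i) and (ii) are then immediate from Proposition 5.2 and the definition of the Busemann function.
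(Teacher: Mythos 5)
Your proposal is correct and follows essentially the same route as the paper: part (i) is the contrapositive of Theorem 3.2) (Proposition 5.2 plus the absence of critical points for a $C^1$ solution of $|\nabla u|_g=1$), part (ii) is the standard computation $b_{\gamma^{+}}(\gamma(s))=-s$, $b_{\gamma^{-}}(\gamma(s))=s$ showing the two Busemann classes differ, and part (iii) is the classical end-to-line construction that the paper simply delegates to the citation [Ab, III 2.3]. The only difference is that you supply in full the details the paper leaves implicit or outsources, and your verification that both halves of the limit geodesic have infinite length (via $d(K,\sigma_i(n))\to\infty$) correctly handles the one non-routine point.
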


\section{Ends and ideal boundary}

 Since any two distinct ends can be connected by a line [\cite{Ab}, III 2.3], we easily get

\begin{cor}
If $M(\infty)$ is a singleton, then $\mathscr{E}(M)$ must be a singleton.
\end{cor}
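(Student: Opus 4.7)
My plan is to argue by contrapositive: assume $\#(\mathscr{E}(M)) \geq 2$ and deduce $\#(M(\infty)) \geq 2$, contradicting the hypothesis. By the cited result [\cite{Ab}, III 2.3], any two distinct ends of a complete Riemannian manifold can be joined by a line; so from $\#(\mathscr{E}(M)) \geq 2$ I would extract a line $\gamma : \mathbb{R} \to M$ whose two tails go to distinct ends. At this point I would simply quote the ``more generally'' half of Corollary 5.3, which says precisely that the existence of a line already forces $M(\infty)$ to contain at least two elements. This yields the contrapositive of the desired statement.

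For a self-contained verification (independent of Corollary 5.3), I would split $\gamma$ into two rays $\gamma^{\pm}(t) := \gamma(\pm t)$, $t \geq 0$, and compare their Busemann functions $b_{\gamma^+}$ and $b_{\gamma^-}$. The triangle inequality gives $d(x,\gamma(t)) + d(x,\gamma(-t)) \geq 2t$, hence $b_{\gamma^+}(x) + b_{\gamma^-}(x) \geq 0$ for every $x \in M$, with equality along $\gamma$. Evaluating at $\gamma(s)$ produces $b_{\gamma^+}(\gamma(s)) = -s$ and $b_{\gamma^-}(\gamma(s)) = s$, so $b_{\gamma^+} - b_{\gamma^-}$ is non-constant and the two Busemann functions represent distinct classes in $M(\infty)$.

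The only nontrivial ingredient is the line-connectivity statement [\cite{Ab}, III 2.3]; once it is invoked, the argument collapses either to a one-line appeal to Corollary 5.3 or to the short Busemann-function computation above. I do not anticipate a real obstacle, since the substantive work has already been carried out in Section 5.
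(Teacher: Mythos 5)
Your argument is correct and is essentially the paper's own proof: the paper likewise takes the contrapositive, joins two distinct ends by a line via [\cite{Ab}, III 2.3], and invokes Corollary 5.3 to conclude that $M(\infty)$ cannot be a singleton. Your supplementary Busemann-function computation ($b_{\gamma^{+}}(\gamma(s))=-s$, $b_{\gamma^{-}}(\gamma(s))=s$) is a correct, self-contained justification of the ``line implies $\#(M(\infty))\geq 2$'' step, but it does not change the route.
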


  For corays, we have the following proposition. 
\begin{prop}[\cite{C}, Theorem 7]
For any ray $\gamma$, all corays to $\gamma$ are cofinal to $\gamma$.
\end{prop}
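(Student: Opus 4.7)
The plan is to use the minimal geodesic segments supplied by the coray definition as bridges carrying tails of $\gamma$ into the same connected component of $M\setminus K$ as tails of $\gamma'$. Fix a compact set $K\subset M$ and choose $R>0$ with $K\subset B_R(\gamma'(0))$. Since $\gamma$ and $\gamma'$ are unit-speed rays, the triangle inequality immediately gives $\gamma'(s)\notin B_R(\gamma'(0))$ for $s>R$ and $\gamma(t)\notin B_R(\gamma'(0))$ for $t>T_0:=R+d(\gamma(0),\gamma'(0))$, so the tails $\gamma'|_{[R+1,\infty)}$ and $\gamma|_{[T_0+1,\infty)}$ are each connected and avoid $K$, and hence each lies in a single component of $M\setminus K$. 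Let $C$ denote the component of $M\setminus K$ containing $\gamma'([R+1,\infty))$. The cofinality claim then reduces to exhibiting a single time $t_k\ge T_0+1$ with $\gamma(t_k)\in C$, for connectedness of $\gamma|_{[T_0+1,\infty)}$ will automatically propagate membership in $C$ to the whole tail, and $t_K:=\max\{R+1,T_0+1\}$ will witness cofinality.

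To produce such a $t_k$, unpack the coray hypothesis: sequences $x_k\to\gamma'(0)$, $t_k\to\infty$, and unit-speed minimal segments $\gamma_k:[0,d(x_k,\gamma(t_k))]\to M$ with $\gamma_k(0)=x_k$, $\gamma_k(d(x_k,\gamma(t_k)))=\gamma(t_k)$, converging to $\gamma'$ uniformly on compact intervals. For all large $k$, $d(x_k,\gamma'(0))<1$, and the minimizing property of $\gamma_k$ yields $d(\gamma'(0),\gamma_k(u))\ge d(x_k,\gamma_k(u))-d(\gamma'(0),x_k)=u-d(\gamma'(0),x_k)>u-1$; hence the portion $\gamma_k|_{[2R+2,\,d(x_k,\gamma(t_k))]}$ stays at distance greater than $R$ from $\gamma'(0)$ and therefore avoids $K$ altogether. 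Because $\gamma'(2R+2)\notin K$ and $K$ is compact, $\delta:=d(\gamma'(2R+2),K)>0$, and the open ball $B_{\delta/2}(\gamma'(2R+2))$ is connected, disjoint from $K$, and contains the point $\gamma'(2R+2)\in C$, so it lies in $C$. Uniform convergence of $\gamma_k$ to $\gamma'$ on $[0,2R+2]$ places $\gamma_k(2R+2)$ in this ball for $k$ large; since the connected set $\gamma_k|_{[2R+2,\,d(x_k,\gamma(t_k))]}$ is contained in $M\setminus K$, both $\gamma_k(2R+2)$ and its other endpoint $\gamma(t_k)$ must lie in the same component $C$. Choosing $k$ large enough that also $t_k\ge T_0+1$ completes the argument.

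The main obstacle I expect is coordinating four length scales --- the radius $R$ of $K$ around $\gamma'(0)$, the threshold $T_0$ forcing $\gamma$ out of $B_R(\gamma'(0))$, the basepoint perturbation $d(x_k,\gamma'(0))$, and the uniform-convergence error of $\gamma_k$ on $[0,2R+2]$ --- so that the same bridge $\gamma_k$ simultaneously avoids $K$ on its tail, tracks $\gamma'$ closely at time $2R+2$, and reaches a point $\gamma(t_k)$ far enough along $\gamma$ to be usable. Once the geometric inequality $d(\gamma'(0),\gamma_k(u))\ge u-d(\gamma'(0),x_k)$, a direct consequence of the minimality of $\gamma_k$, is in hand, the remaining steps are routine point-set topology in $M\setminus K$, and one does \emph{not} need to appeal to any further analytic structure on $M$ beyond local compactness and the Hopf--Rinow theorem already in force.
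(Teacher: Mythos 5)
Your argument is correct. Note that the paper itself gives no proof of this proposition --- it is quoted verbatim from [\cite{C}, Theorem 7] --- so there is no in-paper argument to compare against; your proof supplies what the paper delegates to that external reference. The key steps all check out: the triangle-inequality bound $d(\gamma'(0),\gamma_k(u))\ge u-d(\gamma'(0),x_k)$ (using that $\gamma_k$ is unit-speed minimizing from $x_k$, so $d(x_k,\gamma_k(u))=u$) forces the tail $\gamma_k|_{[2R+2,\,\cdot\,]}$ out of $B_R(\gamma'(0))\supseteq K$; the ball $B_{\delta/2}(\gamma'(2R+2))$ is indeed path-connected by Hopf--Rinow (every point is joined to the center by a minimal geodesic staying inside the ball) and hence lies in the component $C$; and locally uniform convergence then drags $\gamma_k(2R+2)$, and with it the endpoint $\gamma(t_k)$, into $C$, after which connectedness of the tails of $\gamma$ and $\gamma'$ finishes the cofinality claim. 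The only trivial caveat is the degenerate case $K=\emptyset$, where $M\setminus K=M$ is connected and there is nothing to prove.
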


Theorem 4. 1) is just a restatement of Proposition 6.2.

 By Proposition 6.2, we easily get
\begin{cor}
$\#(\mathscr{E}(M)) \leq \#(M(\infty))$.
\end{cor}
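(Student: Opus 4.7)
The plan is to construct an injection $\Psi : \mathscr{E}(M) \hookrightarrow M(\infty)$. For each end $e$ I would choose, by the standard Arzela--Ascoli construction (minimal geodesics from a fixed basepoint to a sequence of points escaping in $e$), a ray $\gamma_e$ lying in $e$, and then set $\Psi(e) := [b_{\gamma_e}]$. The cardinality bound $\#(\mathscr{E}(M)) \leq \#(M(\infty))$ will be immediate once $\Psi$ is shown to be injective, so the whole problem reduces to showing that if two rays $\gamma_e, \gamma_{e'}$ satisfy $b_{\gamma_e} = b_{\gamma_{e'}} + c$ for some $c \in \mathbb{R}$, then $\gamma_e$ and $\gamma_{e'}$ are cofinal.

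The route I would take is to exhibit a single ray that is simultaneously a coray to $\gamma_e$ and to $\gamma_{e'}$; once we have such a ray, Proposition 6.2 makes it cofinal to both, hence $\gamma_e$ cofinal to $\gamma_{e'}$ and $e = e'$. First, I would fix a point $x \in M$ at which $b_{\gamma_e}$ (equivalently $b_{\gamma_{e'}}$) is differentiable; such points are dense by the local semi-concavity recalled in Section 1, and there one has $\nabla b_{\gamma_e}(x) = \nabla b_{\gamma_{e'}}(x)$. Next, Arzela--Ascoli applied to the unit initial tangents of minimal geodesics from $x$ to $\gamma_e(t_k)$, respectively to $\gamma_{e'}(t_k)$, with $t_k \to \infty$ produces corays $\eta$ to $\gamma_e$ and $\eta'$ to $\gamma_{e'}$, both starting at $x$. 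The crux of the argument is that $\eta = \eta'$, which will follow from identifying each coray with the unique calibrated characteristic of its Busemann function through $x$.

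To establish this identification, a short limit argument of the same flavour as Section 2 shows that any coray $\xi$ to a ray $\gamma$ satisfies $b_\gamma(\xi(t)) - b_\gamma(\xi(0)) = -t$ for all $t \geq 0$, i.e., $\xi$ is a calibrated curve for the viscosity solution $b_\gamma$. Differentiating at $t = 0$ and combining with $|\nabla b_\gamma(x)|_g = 1$ then forces $\dot\xi(0) = -\nabla b_\gamma(x)$ via the Cauchy--Schwarz equality case. Applied to both $\eta$ and $\eta'$ this gives $\dot\eta(0) = \dot\eta'(0)$, whence $\eta = \eta'$ as geodesics with prescribed initial data, completing the proof. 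The step I expect to be the main obstacle is the calibration identity for corays: the defining minimal-geodesic sequence supplies only the inequality $b_\gamma(\xi(t)) \leq b_\gamma(\xi(0)) - t$ directly, and one must invoke the 1-Lipschitz property of $b_\gamma$ to upgrade this to equality.
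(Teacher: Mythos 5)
Your proposal is correct and follows the route the paper intends: the corollary is derived from Proposition 6.2, and your argument makes that derivation precise by showing that rays with the same Busemann class share a common coray (through a common differentiability point, using the calibration identity and the Cauchy--Schwarz equality case), which Proposition 6.2 then makes cofinal to both. The paper itself gives no further details beyond citing Proposition 6.2, so your write-up is a faithful and complete implementation of the same approach.
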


If  $M=\mathbb{R}$, both $ \mathscr{E}(M)$ and $M(\infty)$ always contain exactly two elements. For higher dimensional cases,  $\mathbb{R}^n$ ($n \geq 2$) is of one-end, i.e. $\#(\mathscr{E}(M))=1$ ($\mathscr{E}(M)$ is a topological notion and independent of the Riemannian metric). We would like to pose

\begin{prob}
On $\mathbb{R}^n (n \geq 2)$, characterize all Riemannian metrics such that  the associated ideal boundary $M(\infty)$ is a singleton.
\end{prob}

More generally, we pose

\begin{prob}
On any non-compact, boundaryless, connected, paracompact  manifold $M$, is there a complete Riemannian metric $g$ such that the associated ideal boundary $M(\infty)= \mathscr{E}(M)$ (i.e. two rays $\gamma$ and $\gamma^{\prime}$ are cofinal if and only if $b_{\gamma}-b_{\gamma^{\prime}}=const.$)?
\end{prob}
Now we restate Theorem 4. 2) as
\begin{prop}
 If $\#(\mathscr{E}(M)) \geq 3$, then for any Riemannian metric $g$ on $M$, the associated Hamilton-Jacobi equation
$$| \nabla u|_g=1$$
admits no $C^1$ solutions.
\end{prop}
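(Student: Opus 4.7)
The plan is to argue by contradiction: assume $u \in C^1(M)$ with $|\nabla u|_g \equiv 1$, produce a calibrating line through every point of $M$, show that all such lines share a single forward end and a single backward end, and then exploit a third end to force a contradiction via the $1$-Lipschitz property of $u$.

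First I would establish the structural claim that through every $x \in M$ there passes a unit-speed geodesic $\gamma_x \colon \mathbb{R} \to M$ with $\gamma_x(0) = x$, $\dot\gamma_x(0) = -\nabla u(x)$, and $u(\gamma_x(t)) = u(x) - t$ for all $t \in \mathbb{R}$. Any $C^1$ curve with $\dot\gamma = -\nabla u(\gamma)$ has $|\dot\gamma|_g \equiv 1$ and $(u \circ \gamma)' \equiv -1$; combined with the $1$-Lipschitz property of $u$, this forces $d(\gamma(s),\gamma(t)) = |s-t|$, so $\gamma$ is a minimizing geodesic segment. Since $\nabla u$ is continuous and bounded while $M$ is complete, the forward flows of $-\nabla u$ and of $+\nabla u$ both exist for all time, and the two rays obtained from $x$ concatenate smoothly (they are geodesics emanating from $x$ with opposite initial tangent vectors) into the desired line.

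Next I would show that all forward rays $\gamma_x|_{[0,\infty)}$ represent the \emph{same} end $e^+ \in \mathscr{E}(M)$. Fix a compact $K \subset M$ and set $t_0(x) := u(x) - \min_K u + 1$; then for $t \geq t_0(x)$ one has $u(\gamma_x(t)) < \min_K u$, so $\gamma_x(t) \notin K$, and by continuity in $t$ the ray stays in a single component $U(x,K)$ of $M \setminus K$ for $t \geq t_0(x)$. Continuous dependence of the flow on $x$, together with local boundedness of $t_0(x)$, makes $x \mapsto U(x,K)$ locally constant; connectedness of $M$ forces it to be constant; and exhausting by compacta pins down a common end $e^+$. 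The symmetric argument applied to the flow of $+\nabla u$ provides a common backward end $e^-$.

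With this in hand the contradiction is immediate. Using $\#\mathscr{E}(M) \geq 3$, pick $e_0 \in \mathscr{E}(M) \setminus \{e^+, e^-\}$ and choose a compact $K$ whose complement has three distinct unbounded components $U^+$, $U^-$, $U^0$ representing $e^+$, $e^-$, $e_0$ respectively. For $x_0 \in U^0$ the forward ray must leave $U^0$ for $U^+$, hence it first meets $K$ at some time $t^+ \geq d(x_0,K)$; evaluating $u \circ \gamma_{x_0}(t^+) = u(x_0) - t^+$ at a point of $K$ gives $u(x_0) \geq \min_K u + d(x_0,K)$. Symmetrically the backward ray yields $u(x_0) \leq \max_K u - d(x_0,K)$, so $2\,d(x_0,K) \leq \max_K u - \min_K u$, which is absurd since $U^0$ is unbounded. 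The most delicate step is the ``locally constant'' argument in the third paragraph: showing that the end assigned to $x$ depends continuously on $x$ requires controlling the flow on the long time-scale $t_0(x)$, and the monotonicity $u \circ \gamma_x(t) = u(x) - t$ is essential to rule out later re-entry of the integral curve into $K$.
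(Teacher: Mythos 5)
Your argument is correct and follows the same three-step strategy as the paper's proof: the integral curves of the gradient are calibrated lines, all of them share a single forward end and a single backward end, and a third end then forces a contradiction. Still, the two proofs diverge in how they justify the intermediate steps, and the comparison is instructive. The paper first upgrades the regularity of the solution: since $f$ and $-f$ are both viscosity solutions, both are locally semi-concave with linear modulus, so $f$ is locally $C^{1,1}$ by [\cite{CS}, Corollary 3.3.8]; then $\nabla f$ is a locally Lipschitz vector field and the flow $\phi_t$ exists, is unique and depends continuously on the base point. You avoid this bootstrap entirely: your observation that any integral curve of $-\nabla u$ is forced by the $1$-Lipschitz property to be a minimizing geodesic identifies it with $t\mapsto\exp_x\bigl(-t\nabla u(x)\bigr)$, which gives uniqueness and continuous dependence on $x$ for free from the smoothness of $\exp$ and the continuity of $\nabla u$. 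This is a genuine simplification, but you should say it explicitly, because the phrase ``continuous dependence of the flow on $x$'' is not automatic for a merely continuous vector field (Peano gives existence, not a flow); the geodesic identification is precisely what rescues it. For the common-end step, the paper transports a path $\xi$ joining two base points by $\phi_t$ and uses the linear growth of $f$ along the flow to push $\phi_t(\xi)$ off any compact set, whereas you run a locally-constant-function argument on the connected manifold $M$; these are the same idea in different clothing. Finally, the paper's contradiction compares $|t^+-t^-|>2r$ with the diameter bound $d(\gamma'(t^+),\gamma'(t^-))\le r$, while yours compares $2\,d(x_0,K)$ with the oscillation $\max_K u-\min_K u$; since $u$ is $1$-Lipschitz these are equivalent, and your version has the small advantage of not needing to introduce the diameter of $K$ at all.
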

\begin{proof}
Otherwise, there exists a Riemannian metric $g$ such that the associated Hamilton-Jaocbi equation
$$| \nabla u|_g=1$$
admits a $C^1$  solution, say $f$. So  $-f$ is also a $C^1$ solution to the Hamilton-Jacobi equation $(*)$. So both $f$ and $-f$ are locally semi-concave with linear modulus, and thus $f$ is locally $C^{1,1}$ [\cite{CS}, Corollary 3.3.8]. So we obtian that  $\nabla f$ is a locally Lipschitz vector field and consequently existence and uniqueness property of solutions of ODE holds.    Clearly, the integral curves of $\nabla f$ form a locally Lipschitz foliation by lines. Now we fix any integral curve $\gamma_0$, and assume that $\gamma_0^-$ representing $\mathcal{E}_-$ and $\gamma_0^+$ representing $\mathcal{E}_+$, here $\mathcal{E}_-, \mathcal{E}_+ \in M(\mathscr{E})$ may coincide. Here, and in the following, for a line $\gamma:\mathbb{R} \rightarrow M$, $\gamma^+$ and $\gamma^-$ are rays defined by $\gamma^+(t):=\gamma(t)$ and $\gamma^-(t):=\gamma(-t)$ respectively for all $t \geq 0$.

First we prove for any other integral curve $\gamma_1$, we must have $\gamma_1^-$ representing $\mathcal{E}_-$ and $\gamma_1^+$ representing $\mathcal{E}_+$.    Since $M$ is connected, as a manifold, it is path-connected [\cite{Ga1}, Theorem 4]. So there exists a smooth curve $\xi:[0,1] \rightarrow M$ connecting $\gamma_0(0)$ and $\gamma_1(0)$, i.e. $\xi(0)=\gamma_0(0)$ and $\xi(1)=\gamma_1(0)$. We denote the flow generated by $\nabla f$ by $\phi_t$. Now we will get  two facts:

1). For any $t \geq 0$, $\gamma_0^+(t)$ and $\gamma_1^+(t)$ are connected by the curve $\phi_t(\xi)$;

2). Since $f$ is a $C^1$ (in fact, $C^{1,1}$) solution to the Hamilton-Jacobi equation, for any compact subset $K$, there exists a $t_K >0$ such that for $t > t_K$, $\phi_t(\xi) \cap K = \emptyset$.

Combining facts 1) and 2), we obtain that  $\gamma_0^+$ and $\gamma_1^+$ represent the same end (In fact, $\gamma_0^+$ and $\gamma_1^+$ are strongly equivalent, which is  a stronger condition introduced by Hopf than representing the same end, for definition and details, see \cite{Ho}, [\cite{Gu}, 3.3]). Similarly, $\gamma^{-}_0$ and $\gamma^{-}_1$ represent the same end. 

Up to now, we have proved that for any integral curves  $\gamma$, $\gamma^-$ represents $\mathcal{E}_-$ and $\gamma^+$ represents $\mathcal{E}_+$. Based on this fact, we could continue the proof as following.

Since $\#(\mathscr{E}(M)) \geq 3$, there exist a sufficiently large compact subset $K$,  a ray $\zeta$ representing an end $\mathcal{E}$ different from $\mathcal{E}_+$ and $\mathcal{E}_-$, and a real number   $T>0$ such that $\zeta|[T, \infty)$ and $\gamma_0^+|[T, \infty)$ lie in different connected components of $M \setminus K$,  $\zeta|[T, \infty)$ and $\gamma_0^-|[T, \infty)$ lie in different connected components of $M \setminus K$. Denote the diameter of $K$ by $r$ ($r>0$, since $K$ cannot to be a single point set in our case). Choose $S$  large enough such that $d(\zeta(T+S), K) > r$. Considering the integrable curve  $\gamma^{\prime}: \mathbb{R} \rightarrow M$ of $\nabla f$ with $\gamma^{\prime}(0)=\zeta(T+S)$, since ${\gamma^{\prime}}^{+}$ represents $\mathcal{E}_+$ and ${\gamma^{\prime}}^{-}$ represents $\mathcal{E}_-$,  there exist two real numbers $t^+$ and $t^-$ such that:

$\bullet$ $\gamma^{\prime}(t^+) \in K$,   $\gamma^{\prime}|(t^+, \infty)$ and $\gamma_0^+|(T, \infty)$ lie in the same connected component of $M \setminus K$.

$\bullet$ $\gamma^{\prime}(t^-) \in K$,   $\gamma^{\prime}|(-\infty, t^-)$ and $\gamma_0^-|(T, \infty)$ lie in the same connected component of $M \setminus K$.

Since $\gamma^{\prime}$ are lines, we obtain $d(\gamma^{\prime}(t^+), \gamma^{\prime}(t^-))=|t^+-t^-| > 2r.$ On the other hand, the fact that $\gamma^{\prime}(t^+) \in K$ and $\gamma^{\prime}(t^-) \in K$ will imply that $d(\gamma^{\prime}(t^+), \gamma^{\prime}(t^-)) \leq r$. This contradiction proves  Proposition 6.6 and thus Theorem 4. 2) is proved.

\end{proof}

\begin{ack}
The author would like to thank L. Jin for helpful discussions.
\end{ack}

\end{document}